\theoremstyle{plain}
\newtheorem{thm}{Theorem}
\newtheorem{pro}[thm]{Proposition}
\newtheorem{lem}[thm]{Lemma}
\newtheorem{cor}[thm]{Corollary}
\theoremstyle{definition}
\newtheorem{example}[thm]{Example}
\newtheorem*{question}{Question}
\DeclareMathOperator{\GF}{GF}
\DeclareMathOperator{\PG}{PG}
\begin{document}

\title{Some necessary conditions for vector space partitions}

\author{Juliane Lehmann}
\address{J. Lehmann\\
Universit\"at Bremen\\
Department of Mathematics\\
Bibliothekstrasse 1 - [MZH]\\
28359 Bremen\\
Germany }
\email{jlehmann@math.uni-bremen.de}
\author{Olof Heden}
\address{O. Heden\\
Department of Mathematics\\
KTH\\
S-100 44 Stockholm\\
Sweden}
\email{olohed@math.kth.se}

\thanks{First author supported by grant KAW 2005.0098 from the Knut and Alice Wallenberg Foundation.}

\begin{abstract}
Some new necessary conditions for the existence of vector space partitions are derived. They are applied to the problem of finding the maximum number of spaces of dimension $t$ in a vector space partition of $V(2t,q)$ that contains $m_d$ spaces of dimension $d$, where $t/2<d<t$, and also spaces of other dimensions.  It is also discussed how this problem is related to maximal partial $t$-spreads in $V(2t,q)$. We also give a lower bound for the number of spaces in a vector space partition and verify that this bound is tight. 
\end{abstract}

\maketitle

\section{Introduction}\label{sec:1}

Let $V$ denote a vector space $V(n,q)$ of dimension $n$ over the finite field $\GF(q)$. A \emph{vector space partition} $\mathcal{P}$ of $V$ 
is a collection of subspaces $U_{i}$, $i\in I$, of $V$, all of positive dimension, with the property that every nonzero vector of $V$ is contained in a unique
member of $\mathcal P$.

If for $d\in\{1,2,\ldots,k\}$ the vector space partition $\mathcal{P}$ contains $m_{d}$ subspaces of dimension
$d$ and no subspaces of dimension higher than $k$, then the $k$-tuple $(m_{k},m_{k-1},\dots,m_{1})$ is called
the \textit{type} of the partition, and we say that $\mathcal{P}$
is an $(m_{k},m_{k-1},\dots$, $m_{1})$-\textit{partition}
of $V$.
By counting the nonzero vectors in $V(n,q)$, it is clear that
\begin{equation*}
\sum_{d=1}^k m_d(q^d-1)=q^n-1\,.
\end{equation*}
This necessary condition for the existence of a vector space partition of a certain type will here be called the \emph{first packing condition}.

As any two distinct members $U$ and $U'$ of a vector space partition $\mathcal P$ of $V(n,q)$ span a subspace the dimension of which is the sum of the dimensions of $U$ and $U'$, it is clear that
\begin{equation*}
\dim(U)+\dim(U')\leq n\,.
\end{equation*} 
This well known necessary condition for the existence of a vector space partition will here be called the \emph{dimension condition}.

The aim of this paper is twofold: first, we derive further necessary conditions for the existence of vector space partitions of certain types; then we apply them to the study of maximal partial $t$-spreads and vector space partitions in general.

A \emph{partial $t$-spread} $\mathcal{S}$ in the finite vector space $V=V(n,q)$ is here a collection of $t$-dimensional subspaces of $V$ with the property that the intersection of any two members of $\mathcal{S}$ is the zero vector.\footnote{The terminology differs among authors: in some contexts, a partial $t$-spread is regarded as a collection of mutually disjoint $t$-dimensional subspaces of the finite projective space $\PG(n,q)$. In our notation, this would be a partial $(t+1)$-spread in $V(n+1,q)$. Traditionally, a projective $1$-spread in $\PG(3,q)$ is called a \emph{spread}.} If further $\mathcal{S}$ constitutes a vector space partition of $V$, then $\mathcal{S}$ is called a \emph{$t$-spread} in $V$. A \emph{maximal partial $t$-spread} $\mathcal{S}$ in $V$ is a partial $t$-spread with the property that every $t$-dimensional subspace of $V$ has a nontrivial intersection with at least one member of $\mathcal{S}$.

It follows from the first packing condition that if there exists a $t$-spread in $V(n,q)$, then $t$ divides $n$. It is well known (and it will be demonstrated in Section \ref{sec:5}) how to construct $t$-spreads in $V(kt,q)$. However, the complete spectrum of the sizes of maximal partial $t$-spreads is far from known. 

We will mainly apply the new necessary conditions to the case $k=2$, that is, to $t$-spreads and to those partial $t$-spreads in $V(2t,q)$ that are parts of vector space partitions also containing spaces of other, relatively high, dimensions. 
One of our main results, Theorem \ref{thm:lowbound-a}, is the identification of a function $\mathrm{R}_q(t,d,m)$ with the property that for every vector space partition $\mathcal P$ of type $(m_t,m_{t-1},\dots,m_1)$ in $V(2t,q)$, and with $m_t\neq0$,
\begin{equation*}
m_d<\frac{q^t-1}{q^{t-d}-1}\qquad\Longrightarrow\qquad m_t+m_d\leq q^t+1+\mathrm{R}_q(t,d,m_d)\,.
\end{equation*}
For instance, this enables us to exclude the possibility of a vector space partition in $V(8,2)$ of the type $(13,6,0,18)$, as $R_2(4,3,6)=15/9$.\footnote{The exclusion of a vector space partition of $V(8,2)$ of the type $(13,6,6,0)$ was the contribution of the first author of this paper to the paper \cite{hedenpapa}. The search for a generalization of the method used to prove the nonexistence of that particular example was the starting point of the present study.}
It must be remarked that the size of a $t$-spread in $V(2t,q)$ is $q^t+1$, and, as will be discussed in Example~\ref{ex:12} of Section~\ref{sec:4.2}, if $d$ is ''close'' to $t$ and $m$ is not too big, then $\mathrm{R}_q(t,d,m)<1$.

One very natural way to construct a vector space partition of the type $(m_t,m_{t-1},\dots,m_1)$ in $V(2t,q)$ is to start with a $t$-spread, and then partition $a=q^t+1-m_t$ of the $t$-dimensional spaces of the $t$-spread into subspaces. If $m_d>a$ for some $d>t/2$, then, by the dimension condition, this procedure is not possible to perform. However, there are examples of vector space partitions $\mathcal P$ such that the $t$-dimensional subspaces of $\mathcal P$ can be extended to a $t$-spread despite the fact that $m_d$ might be bigger than $a$; both constructions presented in Section~\ref{sec:5} provide such examples. Thus, the situation is rather complex. Nevertheless, in Corollary~\ref{corol:3} in Section~\ref{sec:4.1} we will prove that if both $d>t/2$ and $a<m_d\leq q^{\lceil d/2\rceil}+1$ hold, then the spaces of dimension $t$ in $\mathcal S$ cannot be embedded in any $t$-spread.  This shows the connection to maximal partial $t$-spreads in $V(2t,q)$, as then there exists a maximal partial $t$-spread of a size bigger than or equal to $m_t$ but smaller than $q^t+1$.

The paper is organized in the following way:

Section~\ref{sec:2} reviews one further necessary condition for vector space partitions, beside those mentioned above. We derive our new necessary conditions in Section~\ref{sec:3} and apply them to find the relation to partial $t$-spreads discussed above in Section~\ref{sec:4}. In Section~\ref{sec:5} we give two new constructions of vector space partitions, and we use them to discuss how far from being tight our results of Section~\ref{sec:4} are. In Section~\ref{sec:6} we discuss the consequences of our necessary conditions in connection with two distinct derivation processes: if a partition $\mathcal{P}$ of a given type exists, then
some other partitions $\mathcal{P}_{i}$ of a certain type of other
vector spaces $V_{i}$ must exist as well. These derivation procedures will give natural extensions of our new necessary condition. Finally, these new extensions are evaluated in a computer search.

\section{The tail condition}\label{sec:2}

There are no known general necessary and sufficent conditions for the existence of a vector space partition of the type $(m_k,m_{k-1},\dots,m_1)$ in a finite vector space $V(n,q)$. In this section we present one further necessary condition known as the tail condition, in order to give an overview of all known necessary conditions that later will be compared 
in strength to our new necessary conditions.

The \textit{tail} of a vector space partition $\mathcal{P}$ is 
defined to be the set of members of $\mathcal{P}$ that have the lowest dimension
among the members of $\mathcal{P}$. In \cite{h} the following bounds
for the size $m$ of the tail of a vector space partition were derived:
\begin{pro}\label{pro:tail}
Let $d_1$ be the lowest and $d_2$ be the second lowest among the dimensions in a vector space partition $\mathcal P$ of $V(n,q)$, and let $m$ denote the number of spaces of dimension $d_1$ in $\mathcal P$. Then,
\begin{enumerate}
\item if $d_{2}<2d_{1}$, then $m\geq q^{d_{1}}+1$;
\item if $d_{2}\geq2d_{1}$, then either $d_{1}$ divides $d_{2}$ and $m=(q^{d_{2}}-1)/(q^{d_{1}}-1)$
or $m>2q^{d_{2}-d_{1}}$.
\end{enumerate}
In case that $q^{d_{2}-d_{1}}$ divides $m$, these bounds can be
improved in all cases except $q=2,d_{1}=1,d_{2}=2$ (in which there is no improvement) to
\begin{enumerate}
\setcounter{enumi}{2}
\item If $d_{2}<2d_{1}$, then $m\geq q^{d_{2}}-q^{d_{1}}+q^{d_{2}-d_{1}}$.
\item If $d_{2}\geq2d_{1}$, then $m\geq q^{d_{2}}$. 
\end{enumerate}
\end{pro}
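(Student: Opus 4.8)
The engine of the whole argument is the observation that a vector space partition can be restricted to any subspace, and the plan is to combine this with the partial-spread structure of the tail. Concretely, for a subspace $W\le V$ and the partition $\mathcal P=\{U_i\}$, every nonzero vector of $W$ lies in a unique $U_i$ and hence in the subspace $U_i\cap W$; since $U_i\cap U_j=0$ for $i\neq j$, the nonzero intersections $\{U_i\cap W : U_i\cap W\neq 0\}$ form a vector space partition of $W$, with $\dim(U_i\cap W)=\dim U_i$ when $U_i\subseteq W$ and $\dim(U_i\cap W)\le\dim U_i-1$ otherwise. First I would record this restriction principle and the accompanying dimension bookkeeping, since it will be applied repeatedly to spans of tail spaces, and the first packing condition will then be invoked on the induced partitions.

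Write $T=\{T_1,\dots,T_m\}$ for the tail, a partial $d_1$-spread of $V$. The heart of the argument is to count the nonzero vectors of a well-chosen span that are \emph{not} already covered by $T$: these must be covered by traces $U\cap W$ of members $U$ with $\dim U\ge d_2$. The basic dimensional constraint is that every such foreign $U$ meets each $T_j$ trivially, so inside the span $E=T_i\oplus T_j\cong V(2d_1,q)$ of any two tail spaces every foreign trace $U\cap E$ meets both summands trivially and hence has dimension at most $d_1$. Restricting $\mathcal P$ to $E$ therefore yields a vector space partition of $V(2d_1,q)$ all of whose blocks have dimension at most $d_1$; since a $d_1$-spread, with its $q^{d_1}+1$ blocks, is the coarsest such partition, this restriction is rich in blocks. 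In the regime $d_2<2d_1$ I would show that the foreign members can contribute only a controlled number of these blocks, so that the tail itself must already supply close to a spread's worth, giving $m\ge q^{d_1}+1$; pinning down exactly how the inequality $d_2<2d_1$ limits the foreign contribution is the delicate point of case (i).

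When $d_2\ge 2d_1$ the behaviour splits cleanly. Taking $W=\langle T\rangle$, I would distinguish whether $W$ is entirely covered by $T$. If it is, then $T$ is a genuine $d_1$-spread of $W$, which forces $d_1\mid\dim W$; matching $\dim W$ against the reach of the second-smallest members should pin it to $d_2$ and give $m=(q^{d_2}-1)/(q^{d_1}-1)$, the first alternative of (ii). If $W$ is not covered by $T$, then some foreign trace lives inside $W$, and the larger gap $d_2\ge 2d_1$ now permits such traces to be of substantial dimension; bounding their total coverage of $W$ and charging the remaining vectors of $W$ to tail spaces should produce the strict inequality $m>2q^{d_2-d_1}$.

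The main obstacle I anticipate lies not in these two leading estimates but in the refinements (iii) and (iv) under the extra hypothesis $q^{d_2-d_1}\mid m$. These call for replacing the crude point count by a congruence argument: one counts the foreign traces, or the covered points, modulo $q^{d_2-d_1}$, and shows that the only way to respect the divisibility while staying below the stated thresholds is the exact spread configuration. Carrying this out demands a careful equality analysis, namely determining precisely when the induced partition of $E$ or of $W$ agrees block for block with a spread, and it is here that the degenerate case $q=2$, $d_1=1$, $d_2=2$ resists improvement and must be excluded by hand. I would therefore budget most of the effort for this congruence-and-equality step, treating cases (i) and (ii) as the warm-up that fixes the geometric picture.
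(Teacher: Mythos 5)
This proposition is not proved in the paper: it is quoted from the reference \cite{h} (``On the length of the tail of a vector space partition''), so there is no in-paper argument to compare yours against, and your text would have to stand as a self-contained reproof of the main theorem of that reference. Judged on that standard, what you have written is a plan rather than a proof, and the steps you defer are exactly where the content of the theorem lies. In case (i), restricting $\mathcal P$ to $E=T_i\oplus T_j$ does yield a partition of a copy of $V(2d_1,q)$ into subspaces of dimension at most $d_1$, hence one with at least $q^{d_1}+1$ blocks by the packing condition; but those blocks are traces of arbitrary members of $\mathcal P$, and nothing you say prevents most of them from being traces of foreign members of dimension at least $d_2$. You name ``how the inequality $d_2<2d_1$ limits the foreign contribution'' as the delicate point and then do not supply it --- that \emph{is} the proof of (i). In case (ii), the assertion that when the tail covers $W=\langle T\rangle$ its dimension is ``pinned to $d_2$'' is not automatic: a priori the tail could be a $d_1$-spread of a subspace of dimension $2d_1$ while $d_2=3d_1$, and excluding this requires an argument about how the members of dimension at least $d_2$ cover the complement of $W$, which is absent. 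The strict bound $m>2q^{d_2-d_1}$ in the other alternative is likewise only described as what a count ``should produce''.

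The refinements (iii) and (iv) are the genuinely hard part of the statement, and for them you offer only the phrase ``a congruence argument modulo $q^{d_2-d_1}$'' together with an equality analysis you propose to ``budget for''. A count of covered points modulo $q^{d_2-d_1}$ does not by itself lift $q^{d_1}+1$ to $q^{d_2}-q^{d_1}+q^{d_2-d_1}$, nor $2q^{d_2-d_1}$ to $q^{d_2}$: one has to analyse, for a fixed foreign member $U$ and the cosets of a tail space inside $T_i+U$, how many tail members meet each coset, and the exceptional case $q=2$, $d_1=1$, $d_2=2$ is precisely where such an analysis must be shown to break down by exhibiting extremal configurations. None of this is present. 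In short, the restriction principle and the partial-spread structure of the tail are reasonable starting points, but each of the four inequalities is left at the level of ``should follow,'' so there is no checkable proof here.
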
 
These conditions will be called the
\textit{tail conditions}. It was shown in \cite{h} that the bounds (i), (ii) and (iv) are tight. 
It follows from an example in \cite{hedenpapa} that the bound in (iii) cannot be improved in general.

Let us also mention the following 
result that was proved in \cite{heden09}. 
\begin{pro}
The 
packing condition and the dimension condition together with the condition 
\[
\dim(U_i)\geq c,\quad\text{ for }i\leq q,\
\]
are necessary and sufficient for the existence of a vector space partition $U_1$, $U_2$, ..., $U_k$ of $V(n,q)$, if $\dim(U_{q+1})=\dim(U_{q+2})=\ldots=\dim(U_k)=c$.

\end{pro}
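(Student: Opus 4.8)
The necessity of the packing and dimension conditions holds for every vector space partition, as recalled in Section~\ref{sec:1}, while the inequalities $\dim(U_i)\geq c$ for $i\leq q$ are part of the description of the prescribed type and so are necessary by fiat. Hence the entire content of the proposition lies in the sufficiency: one must construct a partition of $V(n,q)$ of the prescribed type whenever the three conditions are met. I would order the dimensions as $d_1\geq d_2\geq\cdots\geq d_q\geq c$ and write $m=k-q$ for the number of spaces forced to have dimension $c$; the packing condition then fixes $m$, and integrality of $m$ encodes a divisibility constraint modulo $q^c-1$ that I expect to drive the construction. The base case is the one of zero total excess $\sum_{i=1}^{q}(d_i-c)$: then every space has dimension $c$, the packing condition reads $m(q^c-1)=q^n-1$, this forces $(q^c-1)\mid(q^n-1)$ and hence $c\mid n$, and a $c$-spread of $V(n,q)$ furnishes the partition.

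The plan for the general case is to place the large spaces $U_1,\dots,U_q$ pairwise disjointly and then to cover the complement of their union by $c$-dimensional spaces. The dimension condition $d_i+d_j\leq n$ is exactly what allows each pair to meet only in $0$; it is worth stressing that pairwise disjointness is compatible with $\sum_i d_i>n$, precisely as the $q+1$ concurrent lines through a point of $\PG(n-1,q)$ overfill the naive dimension count, and that the total number of nonzero vectors so used is $\sum_i(q^{d_i}-1)\leq q^n-1$ by the packing condition, leaving exactly $m(q^c-1)$ vectors to be covered.

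The heart of the matter is this covering step, and the natural engine is field reduction combined with families of full-rank linear maps. For a single large space $W$ of dimension $d$ with $c\mid(n-d)$ the construction is clean: pass to the quotient $V/W$, whose dimension is a multiple of $c$, choose a $c$-spread of it, and pull each spread element back through $\pi\colon V\to V/W$ to a space of dimension $d+c$ containing $W$. One is then reduced to the model problem of partitioning $V(d+c,q)\setminus W$ into $c$-dimensional complements of $W$, which amounts to a set of $q^{d}$ linear maps from a $c$-dimensional space into $W$ whose pairwise differences all have full rank $c$ and which form an additive group. Since $c\leq d_i$ for every $i$, a maximum rank distance (Gabidulin) code supplies exactly such a family of size $q^d$: the full-rank condition makes the graphs pairwise disjoint, and the group structure makes the evaluation $\phi\mapsto\phi(v_0)$ a bijection onto $W$ for each fixed $v_0\neq0$, so that the graphs cover $V(d+c,q)\setminus W$ exactly.

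The main obstacle I anticipate is the \emph{simultaneous} treatment of up to $q$ large spaces: they must be positioned so that every complement that remains to be filled is still $c$-decomposable, and the local divisibilities $c\mid(\text{complementary dimension})$ needed for each pull-back must be reconciled with the single global relation coming from integrality of $m$. Coordinating the pairwise placements with this one arithmetic constraint is the delicate bookkeeping, and it is precisely here that the threshold $q$ enters, the filling families having room for $q$ independent directions in the quotient; one should not expect the two conditions to remain sufficient once more than $q$ spaces are permitted to exceed the tail dimension. I would finally check the constructed type for consistency against the tail conditions of Proposition~\ref{pro:tail}, which any realizable type must in particular respect.
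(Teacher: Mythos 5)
This proposition is not proved in the paper at all: it is quoted from \cite{heden09}, so there is no in-paper argument to compare against. Judged on its own terms, your proposal is a plan rather than a proof, and the gap sits exactly where you yourself flag it. The necessity direction and the ``model problem'' for a \emph{single} large space $W$ are fine (pairwise-disjoint placement of the $U_i$, $i\leq q$, follows from a greedy basis-extension count that works precisely because there are at most $q$ of them, and the MRD-code filling of $V(d+c,q)\setminus W$ is correct). But the sufficiency claim is about up to $q$ spaces of dimension exceeding $c$ simultaneously, and your text never produces a covering of the complement of $U_1\cup\dots\cup U_q$ by $c$-spaces; it only ``anticipates'' the difficulty of coordinating the placements. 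Since the whole content of the proposition is that the two global conditions suffice in exactly this multi-space regime, leaving that step as an obstacle to be overcome later means the statement is not established.

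Two concrete technical problems inside the sketch reinforce this. First, your single-space reduction pulls back a $c$-spread of $V/W$, which requires $c\mid(n-\dim W)$; the hypotheses only give the one global congruence $\sum_{i}(q^{d_i}-1)\equiv q^n-1\pmod{q^c-1}$ coming from integrality of $m$, and this does not yield a per-space divisibility, so even the building block you describe is not available for each $U_i$ separately. Second, the quotient constructions for different $U_i$ interact: a $(d_i+c)$-space containing $U_i$ obtained from the quotient $V/U_i$ will in general meet the analogous spaces built around $U_j$, so the fills cannot simply be superimposed. A correct proof (as in \cite{heden09}) has to build all $q$ large spaces and the $c$-dimensional spaces within one coherent construction --- e.g.\ realizing the large spaces as graphs of suitable maps inside a common coordinatization so that the leftover set is covered exactly once --- rather than treating the $U_i$ one at a time. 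Until that construction is supplied, the sufficiency direction remains unproved.
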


\section{Some further necessary conditions}\label{sec:3}

In this section, let $V$ denote the vector space $V(n,q)$.
An $(n-1)$-dimensional subspace $H$ of  
$V$ is called a \textit{hyperplane} in $V$. Most of the necessary
conditions that we derive in this section will be consequences of the
following 
lemma: 
\begin{lem} 
Any subspace of dimension $d$ of
the vector space $V$ is contained in exactly $(q^{n-d}-1)/(q-1)$
hyperplanes in $V$. 
\label{lem:1}\end{lem}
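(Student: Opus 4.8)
The plan is to count, in two different ways, the number of pairs $(W, H)$ where $H$ is a hyperplane of $V = V(n,q)$ containing the fixed $d$-dimensional subspace $U$, and $W$ ranges over an appropriate set that makes the count transparent. The cleanest route, however, is a direct quotient-space argument, and that is what I would carry out first. The key observation is that a hyperplane $H$ contains $U$ if and only if the image $H/U$ is a hyperplane of the quotient space $V/U$. Since $\dim(V/U) = n-d$, the hyperplanes of $V$ containing $U$ are in bijection with the hyperplanes of $V(n-d,q)$.

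So the main step reduces to the base case $d=0$: counting all hyperplanes of a space $W = V(m,q)$, which I would then apply with $m = n-d$. Here I would use the standard duality between $(m-1)$-dimensional subspaces and $1$-dimensional subspaces. A hyperplane of $W$ is the kernel of a nonzero linear functional $\varphi \in W^{*}$, and two functionals have the same kernel precisely when they are nonzero scalar multiples of one another. The number of nonzero functionals is $q^{m}-1$, and each hyperplane arises from exactly $q-1$ of them, so the number of hyperplanes is $(q^{m}-1)/(q-1)$. Substituting $m = n-d$ yields the claimed count $(q^{n-d}-1)/(q-1)$.

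Alternatively, and perhaps more in keeping with the combinatorial flavor of the paper, I could count hyperplanes of $W=V(m,q)$ by the Gaussian-binomial formula for the number of $(m-1)$-dimensional subspaces, $\binom{m}{m-1}_q = (q^m-1)/(q-1)$, but the functional argument is self-contained and requires no external identity, so I would prefer it. Either way, I would state the bijection $H \mapsto H/U$ explicitly and check it is well defined and inverted by pulling back along the quotient map $V \to V/U$, since this is the conceptual heart of the reduction.

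The proof presents essentially no obstacle; the only point requiring a moment's care is verifying that the correspondence $H \leftrightarrow H/U$ is genuinely a bijection onto the hyperplanes of $V/U$ — that is, that subspaces of $V$ containing $U$ correspond bijectively to subspaces of $V/U$ (the lattice isomorphism theorem), and that this correspondence preserves codimension so that hyperplanes go to hyperplanes. Once this is in place, the scalar-multiple counting of functionals on $V(n-d,q)$ finishes the argument in one line.
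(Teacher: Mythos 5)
Your argument is correct and complete: the lattice-isomorphism reduction to hyperplanes of $V/U \cong V(n-d,q)$, followed by counting nonzero functionals modulo scalars, is the standard proof of this fact. The paper itself states Lemma~\ref{lem:1} without proof, treating it as well known, so there is no authorial argument to compare against; your write-up supplies exactly the justification the paper leaves implicit, and the one point you flag for care (that the correspondence $H \leftrightarrow H/U$ is a codimension-preserving bijection) is indeed the only place where anything needs checking.
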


Motivated by this, let \[h(d;n,q):=\max\left\{ 0,\frac{q^{n-d}-1}{q-1}\right\}\,.\]

Let $U$ be any subspace of $V$. Any hyperplane $H$ either contains
$U$ or intersects $U$, in a subspace of dimension $\dim(U)-1$. 
For an $(m_{k},m_{k-1},\ldots,m_{1})$-partition $\mathcal{P}$ of
$V$, if $H$ contains $b_{d}$ of the subspaces of dimension $d$
we call $H$ a \emph{hyperplane of type} $b$, where $b$ is the tuple $(b_{k},b_{k-1},\ldots,b_{1})$,
and denote by $s_{b}$ the number of hyperplanes of type $b$. Then,
if $s_{b}>0$, we derive that \[
\sum_{d=1}^{k} b_{d}(q^{d}-1)+\sum_{d=1}^{k}(m_{d}-b_{d})(q^{d-1}-1)=q^{n-1}-1.\]
In particular we have the following, by using the above relation and
the first packing condition: 
\begin{equation}
s_{b}\neq0\quad\Longrightarrow\quad\sum_{d=1}^{k} b_{d}q^{d}=\sum_{d=1}^{k}m_{d} - 1.\label{eq:sec-pack-cond}\end{equation}
This equation is here called \textit{the second packing condition}. It is a linear Diophantine equation; let $B$ be the set
of all its 
solutions $b$ with $0\leq b_d \leq m_d$ for all $d\in\{1,2,\ldots,k\}$.

Since there are $(q^{n}-1)/(q-1)$ hyperplanes in $V$, we have 
\begin{equation}
\frac{q^{n}-1}{q-1} =  \sum_{b\in B}s_{b}.\label{eq:sum-sb-0}\end{equation}

If $b_{d}>1$ for some $d\geq\frac{n}{2}$, then we obtain $s_{b}=0$ from the dimension condition. The following lemma
generalizes Equation~\eqref{eq:sum-sb-0}, yielding a family of equalities and inequalities.

\begin{lem}Let $l\in\mathbb{N}$, let $d_i \in \{1,2,\ldots,n-1\}$ with $d_{1}<d_{2}<\ldots<d_{l}$, and
 $k_i\in\{1,2,\ldots,m_{d_i}\}$ for $i \in \{1,2,\ldots l\}$. Then \begin{multline}\label{eq:sum-sb-gen}
\left(\prod_{i=1}^{l}\binom{m_{d_{i}}}{k_{i}}\right)h(\sum_{i=1}^{l}k_{i}d_{i};n,q)\leq \\
\leq\sum_{b\in B}\left(\prod_{i=1}^{l}\binom{b_{d_{i}}}{k_{i}}\right)s_{b}
\leq\left(\prod_{i=1}^{l}\binom{m_{d_{i}}}{k_{i}}\right)h(d_{\textrm{min}};n,q),\end{multline}
where $d_{\textrm{min}}=\begin{cases}
2d_{l} & \textrm{if }k_{l}\geq2\\
d_{l}+d_{l-1} & \textrm{if }k_{l}=1\textrm{ and }l\geq2\\
d_{1} & \textrm{if }l=k_{1}=1\\
0 & \textrm{if }l=0\end{cases}$.\end{lem}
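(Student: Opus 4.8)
The plan is to prove the double inequality \eqref{eq:sum-sb-gen} by a double-counting argument, counting incidences between hyperplanes of $V$ and ordered (or unordered) $l$-tuples of subspaces of $\mathcal P$ of the prescribed dimensions. The central quantity is
\[
N:=\sum_{b\in B}\left(\prod_{i=1}^{l}\binom{b_{d_{i}}}{k_{i}}\right)s_{b}\,.
\]
I would first give $N$ a combinatorial meaning: for each hyperplane $H$ of type $b$, the factor $\prod_{i}\binom{b_{d_{i}}}{k_{i}}$ counts the number of ways to choose, for each $i$, an unordered set of $k_i$ subspaces of dimension $d_i$ from among those of that dimension that are contained in $H$. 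Summing over all hyperplanes, $N$ equals the number of incident pairs $(H,\mathcal F)$, where $\mathcal F$ is such a selection of $\sum_i k_i$ subspaces and $H$ is a hyperplane containing every member of $\mathcal F$. Reversing the order of summation, I would instead sum over the selections $\mathcal F$ first: there are exactly $\prod_{i}\binom{m_{d_i}}{k_i}$ choices of $\mathcal F$ (choosing $k_i$ of the $m_{d_i}$ spaces of dimension $d_i$, independently over $i$), and for each fixed $\mathcal F$ the number of hyperplanes containing all its members is the number of hyperplanes through the span $W_{\mathcal F}$ of the chosen subspaces.

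By Lemma~\ref{lem:1}, the number of hyperplanes through $W_{\mathcal F}$ is $(q^{\,n-\dim W_{\mathcal F}}-1)/(q-1)$ when $\dim W_{\mathcal F}<n$, and is $0$ when $W_{\mathcal F}=V$; in both cases this equals $h(\dim W_{\mathcal F};n,q)$. So
\[
N=\sum_{\mathcal F} h(\dim W_{\mathcal F};n,q)\,,
\]
the sum running over all $\prod_i\binom{m_{d_i}}{k_i}$ selections. The two bounds then follow by bounding $\dim W_{\mathcal F}$ from above and below and using that $h(d;n,q)$ is nonincreasing in $d$. For the lower bound I would use that the $\sum_i k_i$ chosen subspaces span a space of dimension at most $\sum_i k_i d_i$ (the sum of their dimensions), hence $\dim W_{\mathcal F}\le\sum_i k_i d_i$ and $h(\dim W_{\mathcal F};n,q)\ge h(\sum_i k_i d_i;n,q)$ for every $\mathcal F$; summing over all $\prod_i\binom{m_{d_i}}{k_i}$ selections gives the left inequality. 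For the upper bound I would bound $\dim W_{\mathcal F}$ from below: since the members of a vector space partition are pairwise trivially intersecting, any two distinct chosen subspaces already span a space whose dimension is the sum of their dimensions, which forces $\dim W_{\mathcal F}\ge d_{\mathrm{min}}$, whence $h(\dim W_{\mathcal F};n,q)\le h(d_{\mathrm{min}};n,q)$; summing gives the right inequality.

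The main obstacle is the careful verification of the case distinction defining $d_{\mathrm{min}}$, i.e.\ justifying the smallest possible value of $\dim W_{\mathcal F}$ over all selections. If $k_l\ge2$, then $\mathcal F$ contains at least two distinct spaces of the top dimension $d_l$, and these alone span a $2d_l$-dimensional space, giving $d_{\mathrm{min}}=2d_l$. If $k_l=1$ and $l\ge2$, the two largest dimensions present are $d_l$ and $d_{l-1}$, and a space of each, being distinct members of $\mathcal P$, span a $(d_l+d_{l-1})$-dimensional space, so $d_{\mathrm{min}}=d_l+d_{l-1}$; one must check that no selection can do better, which follows because any selection contains a space of dimension $d_l$ and at least one further space of some dimension $d_i$ with $i<l$, forcing $\dim W_{\mathcal F}\ge d_l+d_i\ge d_l+d_{l-1}$. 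The degenerate cases $l=k_1=1$ (a single chosen space, $\dim W_{\mathcal F}=d_1$) and $l=0$ (empty selection, $W_{\mathcal F}=0$, recovering \eqref{eq:sum-sb-0}) are immediate. The only remaining subtlety is that a span may strictly exceed $d_{\mathrm{min}}$, or may collapse to $V$ so that $h$ vanishes; since $h$ is monotone and the bounds are stated with the extreme values of $\dim W_{\mathcal F}$, these possibilities are already absorbed, and the inequalities hold term by term before summation.
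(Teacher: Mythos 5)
Your proposal is correct and follows essentially the same route as the paper: a double count of incidences between hyperplanes and selections of $k_i$ subspaces of each dimension $d_i$, with Lemma~\ref{lem:1} converting the per-selection count into $h(\dim W_{\mathcal F};n,q)$ and the two inequalities coming from bounding $\dim W_{\mathcal F}$ above by $\sum_i k_i d_i$ and below by $d_{\mathrm{min}}$ via the directness of the sum of any two distinct members of $\mathcal P$. Your verification of the case distinction for $d_{\mathrm{min}}$ is slightly more explicit than the paper's, but the argument is the same.
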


\begin{proof}We will double count all tuples $(H,\mathcal{U}_{d_{1}},\mathcal{U}_{d_{2}},\ldots,\mathcal{U}_{d_{l}})$,
where $H$ is a hyperplane, $\mathcal{U}_{d_{i}}$ is a subset of
the $d_{i}$-dimensional subspaces in $\mathcal{P}$ with cardinality
$k_{i}$ and $U\subseteq H$ for all $U\in\mathcal{U}_{d_{i}}$. We
obtain the expression in the middle, since each hyperplane $H$ contributing
to $s_{b}$ gets counted once for each choice of the sets $\mathcal{U}_{d_{i}}$
entirely from those subspaces in the partition contained in $H$. 

On the other hand, all different choices of $(\mathcal{U}_{d_{i}})_{i=1}^l$
yield subspaces $U$, where $U=\sum_{i=1}^{l}\sum_{U_{\alpha}\in\mathcal{U}_{d_{i}}}U_{\alpha}$.
Each of these subspaces is contained in $h(\dim(U);n,q)$
different hyperplanes, and $\dim(U)$ is at most $\sum_{i}k_{i}d_{i}$,
if all the sums are direct, and at least $d_{\textrm{min}}$, because
the sum of any \emph{two} $U_{\alpha}$ is direct, in particular of
the two of highest dimension.\end{proof}

\begin{cor}For any $1\leq d<d'\leq n-2$ such that $m_d,m_{d'}>0$,\begin{eqnarray}
\sum_{b\in B}b_{d}s_{b} & = & m_{d}h(d;n,q)\label{eq:sum-sb-i}\\
\sum_{b\in B}\binom{b_{d}}{2}s_{b} & = & \binom{m_{d}}{2}h(2d;n,q)\label{eq:sum-sb-2i}\\
\sum_{b\in B}b_{d}b_{d'}s_{b} & = & m_{d}m_{d'}h(d+d';n,q).\label{eq:sum-sb-i-j}\end{eqnarray}
\label{cor:1}\end{cor}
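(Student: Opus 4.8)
The plan is to obtain Corollary~\ref{cor:1} as three special cases of the preceding lemma, Equation~\eqref{eq:sum-sb-gen}, by choosing the parameters $l$, the $d_i$, and the $k_i$ appropriately so that the outer lower and upper bounds collapse to a single value, forcing equality in the middle. The key observation is that the inequalities in \eqref{eq:sum-sb-gen} become equalities precisely when the two applications of $h(\cdot;n,q)$ produce the same number, i.e. when $\sum_i k_i d_i = d_{\textrm{min}}$; this happens exactly when there is only one subspace involved in total (so that ``the sum of the two of highest dimension'' and ``the direct sum of all of them'' are the same constraint) or when all the involved dimensions coincide.

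First I would prove \eqref{eq:sum-sb-i}. I take $l=1$, $d_1=d$, and $k_1=1$. The right-hand bracket in the definition of $d_{\textrm{min}}$ gives the case $l=k_1=1$, so $d_{\textrm{min}}=d_1=d$, and the left-hand sum $\sum_i k_i d_i = d$ as well. Thus both the lower and upper bounds in \eqref{eq:sum-sb-gen} equal $\binom{m_d}{1}h(d;n,q)=m_d\,h(d;n,q)$, while the middle term is $\sum_{b\in B}\binom{b_d}{1}s_b=\sum_{b\in B}b_d\,s_b$. The hypothesis $d\le n-2$ guarantees $n-d\ge 2$, so $h(d;n,q)=(q^{n-d}-1)/(q-1)>0$ and the max in the definition of $h$ is attained by the nontrivial term; this is why the stated range of $d,d'$ matters. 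Equation \eqref{eq:sum-sb-i} follows.

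Next, for \eqref{eq:sum-sb-2i} I take $l=1$, $d_1=d$, $k_1=2$ (legitimate since $m_d>0$ is assumed, though I should note $m_d\ge2$ is needed for $\binom{m_d}{2}$ to be the relevant count; if $m_d<2$ both sides vanish trivially). Here the case $k_l\ge2$ of $d_{\textrm{min}}$ applies, giving $d_{\textrm{min}}=2d_l=2d$, which again matches $\sum_i k_i d_i=2d$, so the outer bounds coincide at $\binom{m_d}{2}h(2d;n,q)$ and equality is forced. For \eqref{eq:sum-sb-i-j} I take $l=2$ with $d_1=d<d'=d_2$ and $k_1=k_2=1$; now the case $k_l=1,\ l\ge2$ gives $d_{\textrm{min}}=d_l+d_{l-1}=d'+d=\sum_i k_i d_i$, so once more the two bounds agree at $\binom{m_d}{1}\binom{m_{d'}}{1}h(d+d';n,q)=m_d m_{d'}h(d+d';n,q)$, and the middle term is $\sum_{b\in B}b_d b_{d'}s_b$.

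I do not expect a serious obstacle, since the corollary is a direct specialization; the only genuine content is the bookkeeping needed to see that in each case the upper and lower bounds in \eqref{eq:sum-sb-gen} coincide, which is exactly the identity $\sum_i k_i d_i = d_{\textrm{min}}$. The mild subtlety worth flagging is that equality of the outer bounds relies on $h$ being evaluated at the same argument on both sides and on that argument being at most $n-1$ (so that the sum is not forced trivial by the dimension condition); the constraint $d'\le n-2$, equivalently $d+d'\le n-1$ in the relevant case, together with $h$'s truncation at $0$, is what keeps everything consistent. I would also remark in passing that these equalities are the cases of \eqref{eq:sum-sb-gen} that carry information as exact moment-type identities for the distribution $\{s_b\}$, which is presumably how they will be used subsequently.
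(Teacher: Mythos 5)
Your proposal is correct and is exactly the intended derivation: the paper states the corollary without proof as an immediate specialization of Equation~\eqref{eq:sum-sb-gen} with $(l,k_1)=(1,1)$, $(l,k_1)=(1,2)$, and $(l,k_1,k_2)=(2,1,1)$ respectively, where in each case $\sum_i k_i d_i=d_{\mathrm{min}}$ forces the outer bounds to coincide. Your additional remarks (the trivial case $m_d<2$ for the second identity, and the role of the range of $d,d'$) are accurate and harmless.
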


Note that Equations~\eqref{eq:sum-sb-0}
and \eqref{eq:sum-sb-gen} together with the condition that all $s_{b}$
are nonnegative define a polytope $P_m$ in a real space with basis $B$.
Then all the necessary conditions are fulfilled iff the integer hull $I(P_m)=\mathbb{Z}^{|B|}\cap P_m$ of 
this polytope is nonempty, which can be checked for example using 
LattE macchiato \cite{latte}.

\section{An application}\label{sec:4}

In this section we will apply the necessary conditions given in Section \ref{sec:3} to the following problem, already touched upon in the introduction: 

Assume that you are concerned with vector space partitions $\mathcal{P}$ of $V(2t,q)$ into $q^t+1-a$ spaces of dimension $t$ and $m_d$ spaces of dimension $d$ larger than $t/2$, and the remaining spaces of dimension 1. As any vector space of dimension $t$ admits a vector space partition into one space of dimension $d<t$ and $(q^t-q^d)/(q-1)$ spaces of dimension $1$, such a vector space partition $\mathcal P$ always exists if $a\geq m_d$.

As we will see in Section \ref{sec:5}, it is not at all necessary that $a\geq m_d$. 
So the problem is then to find the maximum value of $m_d$ for a given value of $a$
or 
to find the minimum value $a$ for a given number $m_d$ of spaces of dimension $d$.

The motivation for this particular interest in the relation between the parameters $a$ and $m_d$ comes from the study of maximal partial $t$-spreads in $V(2t,q)$.

\subsection{Relations to maximal partial $t$-spreads}\label{sec:4.1}

When $a<m_d$, we can sometimes assure that the set of spaces of dimension $t$ cannot be extended to a $t$-spread. This will be discussed now.

We will need the following triviality.

\begin{pro}\label{pro:1} Let $W$ be any subspace of $V(2t,q)$. If $\{U_{i}\}_{i=1}^{q^t+1}$
is a $t$-spread of $V$, then 
thoses spaces $U_{i}\cap W$ 
that are not the zero space 
will together constitute a
partition of $W$. \end{pro}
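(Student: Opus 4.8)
The plan is to show two things: first, that the nonzero spaces $U_i \cap W$ are pairwise disjoint (intersect only in $0$), and second, that every nonzero vector of $W$ lies in one of them. Together these say exactly that the nonzero $U_i \cap W$ form a vector space partition of $W$.

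For the covering property, let $w$ be any nonzero vector of $W$. Since $\{U_i\}_{i=1}^{q^t+1}$ is a $t$-spread of $V=V(2t,q)$, it is in particular a vector space partition of $V$, so $w$ belongs to a unique member $U_j$. As $w \in W$ as well, we get $w \in U_j \cap W$, and this intersection is therefore nonzero. Hence every nonzero vector of $W$ is covered by some nonzero $U_i \cap W$.

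For disjointness, suppose a nonzero vector $w$ lay in both $U_i \cap W$ and $U_j \cap W$ with $i \neq j$. Then $w$ would be a nonzero vector contained in two distinct members $U_i$ and $U_j$ of the spread, contradicting the uniqueness in the definition of a vector space partition (equivalently, $U_i \cap U_j = \{0\}$ since the spread members are pairwise disjoint $t$-spaces). So distinct nonzero spaces $U_i \cap W$ share only the zero vector. Finally, each $U_i \cap W$ is a subspace of $W$, being the intersection of two subspaces of $V$, so the nonzero ones are subspaces of positive dimension, and the two properties above are precisely the definition of a vector space partition of $W$.

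This statement really is, as the authors say, a triviality: there is no substantive obstacle. The only thing to be careful about is the quantifier bookkeeping — making sure one invokes the \emph{uniqueness} half of the partition definition for disjointness and the \emph{existence} half for covering, and noting that we discard the zero intersections so that all listed spaces have positive dimension as required of partition members. The argument uses nothing beyond the definition of a vector space partition given in Section~\ref{sec:1}, so no earlier lemma is needed.
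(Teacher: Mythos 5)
Your proof is correct and complete; the paper itself offers no proof of Proposition~\ref{pro:1}, labelling it a triviality, and your argument (covering from the existence half of the partition property, disjointness from the uniqueness half, plus the observation that each $U_i\cap W$ is a subspace of $W$) is exactly the intended routine verification.
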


We will also need some lower bounds on the number
of spaces in a vector space partition. The bound in the following proposition improves a bound
of Spera \cite{spera}, who proved that the number of spaces in any non-trivial vector
space
partition is at least equal to $q^d+1$, where $d$ is the least integer with $m_d\neq0$.

\begin{pro}\label{pro:card-part-dep-dimension} Let $d$ be the highest
and $d'<n-d$ one of the other
dimensions that appears in a vector space partition $\mathcal{P}$
of a space $V(n,q)$. Then \begin{equation}
|{\mathcal{P}}|\geq q^{d}+q^{d'}+1\,.\label{eq:7}\end{equation}
\end{pro}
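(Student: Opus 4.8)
The plan is to fix a member $A\in\mathcal P$ of the highest dimension $d$ and a member $B\in\mathcal P$ of dimension $d'$. Since $A$ and $B$ are distinct members of the partition they meet only in $0$, so $\dim(A+B)=d+d'$, and the hypothesis $d'<n-d$ says exactly that $d+d'<n$. The key reduction is to pass to a subspace $H$ of dimension $d+d'+1$ containing $A+B$, which exists precisely because $d+d'+1\le n$. Intersecting every member of $\mathcal P$ with $H$ produces the collection $\mathcal Q=\{U\cap H : U\in\mathcal P,\ U\cap H\neq 0\}$; since distinct members of $\mathcal P$ meet only in $0$, this is again a vector space partition, now of $H$ (this is the same mechanism as in Proposition~\ref{pro:1}, but for an arbitrary partition). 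Because $U\mapsto U\cap H$ is injective on the members that meet $H$, we get $|\mathcal P|\ge|\mathcal Q|$, so it suffices to bound $|\mathcal Q|$ from below. The gain is that inside $H$ the subspace $W:=A+B$ has dimension $d+d'$, that is, $W$ is a \emph{hyperplane} of $H$.

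The heart of the argument is then a double count of the nonzero vectors of $H$ relative to $W$, organised by how each member $U'\in\mathcal Q$ sits with respect to $W$. A member contained in $W$ contributes only inside $W$, while a member $U'\not\subseteq W$ satisfies $U'+W=H$ and hence $\dim(U'\cap W)=\dim U'-1$, so it covers exactly $q^{\dim U'-1}(q-1)$ of the $q^{\dim H-1}(q-1)$ vectors lying outside $W$. Counting the vectors outside $W$ gives $\sum_{U'\not\subseteq W}q^{\dim U'-1}=q^{\dim H-1}$, and counting those inside $W$ (using that the members contained in $W$ together with the traces $U'\cap W$ of the remaining members partition $W$) gives a second linear relation. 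Eliminating the sum over the members not contained in $W$ between these two relations collapses everything to the clean identity $|\mathcal Q|=1+\sum_{U'\in\mathcal Q,\ U'\subseteq W}q^{\dim U'}$. As both $A$ and $B$ are members of $\mathcal Q$ contained in $W=A+B$, the two terms $q^{\dim A}=q^{d}$ and $q^{\dim B}=q^{d'}$ occur in this sum, whence $|\mathcal Q|\ge 1+q^{d}+q^{d'}$, and the bound $|\mathcal P|\ge q^{d}+q^{d'}+1$ follows.

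I expect the routine but delicate part to be the two counting relations and their elimination: one must keep careful track of which members lie inside $W$, which meet it in a subspace of codimension one in themselves, and how the factor $(q-1)$ cancels, so that the final identity really is free of the unknown dimensions of the large members. The main conceptual obstacle, however, is recognising the reduction itself. Without first shrinking the ambient space so that $A+B$ becomes a hyperplane, the vectors outside $A+B$ are covered by subspaces meeting $A+B$ in arbitrarily large intersections, and the clean identity is unavailable; the point is that restriction to $H$ costs nothing, since it can only decrease the number of spaces, while it forces $A+B$ to be a hyperplane, which is exactly what makes the count close.
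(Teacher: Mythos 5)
Your proof is correct and is at heart the same argument as the paper's: a count of the partition members relative to a codimension-one subspace containing both $A$ and $B$, and your ``clean identity'' $|\mathcal Q|=1+\sum_{U'\subseteq W}q^{\dim U'}$ is precisely the second packing condition \eqref{eq:sec-pack-cond} applied to the partition $\mathcal Q$ of $H$ and its hyperplane $W$. The paper reaches the same point more directly: by Lemma \ref{lem:1} the subspace $A+B$ (of dimension $d+d'<n$) lies in a hyperplane of $V(n,q)$ itself, so \eqref{eq:sec-pack-cond} applies at once with $b_d,b_{d'}\geq 1$, making your preliminary restriction to the $(d+d'+1)$-dimensional subspace $H$ unnecessary.
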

\begin{proof}
By Lemma \ref{lem:1}, if $U$ and $U'$ are spaces of $\mathcal P$ of dimension $d$ and $d'$, respectively, where $d+d'<n$, then there is at least one hyperplane that contains both $U$ and $U'$. Now use Equation \eqref{eq:sec-pack-cond}.
\end{proof}
 If a vector space partition ${\mathcal{P}}$ of $V(n,q)$ consists of spaces solely of dimension
$d$ and
$n-d\leq d$, then the dimension condition implies that there is just one space of dimension $d$ in ${\mathcal P}$, and hence, by using the first packing condition we can deduce that
\begin{equation}
|{\mathcal{P}}|=q^{d}+1\,.\label{eq:8}\end{equation} 
In general, and as easily seen from the packing condition in case every member of ${\mathcal P}$ has dimension at most equal to $n/2$, and in the other cases as an immediate consequence of the two preceding equations,
\begin{equation}
|{\mathcal{P}}|\geq q^{\lceil n/2\rceil}+1\,.\label{eq:part-card-gen}\end{equation}

We will prove in Section \ref{sec:5.2} that the above three bounds for the number of spaces in a vector space partition are tight.

\begin{pro}\label{corol:1}
Let the number of spaces of dimension $t$ in a vector space partition $\mathcal P$ 
of $V(2t,q)$ be $q^{t}+1-a$, and assume that $\mathcal P$ contains $m_d$ spaces of dimension $\frac{t}{2}	<d<t$.
If the $t$-dimensional spaces of $\mathcal P$ 
can be completed to a $t$-spread $\mathcal S$, then 
\[
a\geq\min(m_d,q^{\lceil d/2\rceil}+1)\,.
\] \end{pro}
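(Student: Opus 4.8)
The plan is to exploit the fact that each $d$-dimensional member $W$ of $\mathcal{P}$ is forced, by Proposition~\ref{pro:1}, to be cut into pieces by the completing spread $\mathcal{S}$, and that these pieces can only come from the spread elements \emph{not} already present in $\mathcal{P}$. Write $\mathcal{S}=\{U_i\}_{i=1}^{q^t+1}$ for the $t$-spread extending the $t$-dimensional spaces of $\mathcal{P}$, so that exactly $a$ of the $U_i$ lie outside $\mathcal{P}$; I will call these the \emph{surplus} spaces. Fix a $d$-dimensional member $W$ of $\mathcal{P}$. Since $W$ and any $t$-dimensional member of $\mathcal{P}$ are distinct members of the partition $\mathcal{P}$, they meet only in the zero space; hence $W\cap U_i=0$ for every $U_i$ already belonging to $\mathcal{P}$. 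By Proposition~\ref{pro:1} the nonzero spaces $W\cap U_i$ partition $W$, so every nonzero intersection arises from one of the $a$ surplus spaces, and in particular the number of parts of this induced partition of $W$ is at most $a$.

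I would then split into two cases according to whether some $d$-space fails to sit inside a single spread element. First suppose every $d$-dimensional member $W$ of $\mathcal{P}$ satisfies $W\subseteq U_{i(W)}$ for a single index $i(W)$; by the previous paragraph $U_{i(W)}$ is necessarily a surplus space. The key claim here is that $W\mapsto U_{i(W)}$ is injective: if two distinct members $W,W'$ of $\mathcal{P}$ lay in the same $U_i$, then $\dim(W\cap W')\geq 2d-t>0$ (using $d>t/2$), contradicting that distinct members of $\mathcal{P}$ intersect trivially. Thus the $m_d$ spaces of dimension $d$ inject into the $a$ surplus spaces, giving $a\geq m_d$.

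In the complementary case there is a $d$-dimensional member $W$ with $W\not\subseteq U_i$ for every $i$. Then the nonzero intersections $W\cap U_i$ constitute a partition of $W=V(d,q)$ into at least two parts, i.e.\ a nontrivial vector space partition, so by~\eqref{eq:part-card-gen} it has at least $q^{\lceil d/2\rceil}+1$ members. As each part sits in a distinct surplus space, we obtain $a\geq q^{\lceil d/2\rceil}+1$. Combining the two cases yields $a\geq\min(m_d,q^{\lceil d/2\rceil}+1)$, which is the assertion.

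The step I expect to demand the most care is the bookkeeping that ties the nonzero intersections to the surplus spaces: this rests on the partition property of $\mathcal{P}$ (forcing $W\cap U_i=0$ whenever $U_i\in\mathcal{P}$) together with Proposition~\ref{pro:1}, and one must be sure that distinct indices $i$ genuinely give distinct surplus spaces. The injectivity in the ``contained'' case hinges entirely on the strict inequality $2d-t>0$, so the hypothesis $d>t/2$ must be invoked exactly there; the ``not contained'' case is then a clean appeal to the general lower bound~\eqref{eq:part-card-gen} applied inside $W$.
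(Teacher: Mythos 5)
Your proof is correct and follows essentially the same route as the paper: the same dichotomy between all $d$-spaces lying inside spread members (where a dimension count gives injectivity into the $a$ surplus spaces) and some $d$-space being cut by the spread (where Proposition~\ref{pro:1} and the lower bound \eqref{eq:part-card-gen} give $a\geq q^{\lceil d/2\rceil}+1$). Your bookkeeping of the nonzero intersections with the surplus spaces is a slightly more explicit version of what the paper leaves implicit, but the argument is the same.
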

\begin{proof}
 There are two possible cases. All spaces
of dimension $d$ in the partition may be subspaces of spaces in
the $t$-spread $\mathcal S$. By the dimension condition, as $d>t/2$, each space of $\mathcal S$ can contain at most one of the subspaces of dimension $d$ of $\mathcal{P}$.
So in this case, we must delete at least $m_d$ spaces from $\mathcal S$ to obtain the $t$-dimensional spaces in $\mathcal{P}$. Hence, $a\geq m_d$ holds.

In the other case, at least one space $W$ of
dimension $d$ in $\mathcal{P}$ is not contained in any of the spaces of $\mathcal S$. By Proposition
\ref{pro:1}, the $t$-spread $\mathcal S$ gives a vector space partition of $W$ into subspaces
contained in the spaces $U_{i}$, $i\in I$, belonging to
$\mathcal S$, and where $|I|$ equals the number of spaces in the vector space partition of $W$. These spaces $U_{i}$ are contained in the set
${\mathcal S}\setminus{\mathcal{P}}$. Hence, Proposition~\ref{pro:card-part-dep-dimension} yields $a\geq|I|\geq q^{\lceil\dim(W)/2\rceil}+1$. \end{proof}

The \textit{deficiency} $\delta$ of a partial $t$-spread $\mathcal S$ in $V(kt,q)$ is the 
integer $\sum_{i=0}^{k-1}q^{it}-|{\mathcal S}|$. The problem is to find a general upper bound for the deficiency of a maximal partial $t$-spread in $V(kt,q)$. 

The following bounds for the deficiency of maximal partial $t$-spreads in $V(kt,q)$ are known, see \cite{goeverts}:

\begin{thm}\label{thm:10} For any maximal partial $t$-spread in $V(kt,q)$ with deficiency $\delta>0$:
\begin{enumerate}
\item $\delta\geq\sqrt{q}+1$ when $q$ is a square;
\item $\delta\geq c_{p}q^{2/3}+1$ when $q=p^{h}$, $h$ odd, $h>2$,
$p$ prime, $c_{2}=c_{3}=2^{-1/3}$ and $c_{p}=1$ for $p>3$;
\item $\delta\geq(q+3)/2$ when $q$ is a prime.
\end{enumerate}\end{thm}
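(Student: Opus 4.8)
These three bounds are quoted from \cite{goeverts}, so the plan is to reconstruct the finite-geometry argument that converts a deficiency bound into a blocking-set bound, and then to cite the appropriate extremal results. First I would pass to $\PG(kt-1,q)$, where the members of $\mathcal S$ become pairwise disjoint $(t-1)$-flats and a full $t$-spread covers every point exactly once. A full $t$-spread has $\sum_{i=0}^{k-1}q^{it}$ members, so a partial spread of deficiency $\delta$ leaves a set $N$ of \emph{holes} (points covered by no member of $\mathcal S$) with
\[
|N|=\delta\,\frac{q^t-1}{q-1}\,,
\]
the cardinality of $\delta$ pairwise disjoint $(t-1)$-flats.

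The core of the argument is to show that $N$ is not an arbitrary point set but a \emph{minihyper}. Maximality of $\mathcal S$ forbids any $(t-1)$-flat from missing $\mathcal S$ entirely, and counting the flats of $\mathcal S$ through points of $N$ pins down how $N$ meets subspaces, yielding strong divisibility and intersection constraints: every hyperplane should meet $N$ in a number of points congruent modulo $q$ to the value forced by the minihyper parameters. I would then invoke the structure theory of small minihypers, which are unions of subspaces up to a controlled error, to project $N$ down to the plane; via a quotient by a suitable $(kt-3)$-flat one obtains a blocking set $B$ in $\PG(2,q)$ of size exactly $q+\delta$, and, crucially, one containing no line.

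Granting this reduction, the three cases are immediate translations of the three classical lower bounds for the size of a line-free blocking set in the Desarguesian plane. Bruen's Baer-subplane bound $|B|\geq q+\sqrt q+1$ for $q$ a square gives (i); the Blokhuis--Ball--Brouwer--Storme--Sz\H{o}nyi bound $|B|\geq q+c_pq^{2/3}+1$ for $q=p^h$ with $h$ odd gives (ii); and Blokhuis's bound $|B|\geq 3(p+1)/2$ for $q=p$ prime gives (iii), since $3(p+1)/2-p=(p+3)/2$. In each case one simply subtracts the $q$ coming from the reduction.

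The main obstacle lies entirely in the middle step, not in the final appeal to blocking-set theory, which is off the shelf. One must verify that the holes really do assemble into a minihyper with the correct parameters, that the projection to the plane preserves the blocking property and produces precisely $q+\delta$ points, and that the resulting blocking set is line-free, so that the nontrivial bounds apply rather than the trivial size $q+1$. Carrying this out for general $k$, as opposed to the case $k=2$ of primary interest in this paper, is where induction on $k$ together with careful control of the quotient geometry will be required.
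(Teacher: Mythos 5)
The paper does not prove this theorem: it is imported verbatim from Govaerts and Storme \cite{goeverts}, so there is no internal argument to compare yours against. Your reconstruction does follow the route taken in that reference and in the blocking-set literature it rests on: the holes of a maximal partial $t$-spread of deficiency $\delta$ form a set of size $\delta(q^{t}-1)/(q-1)$ meeting every hyperplane in at least $\delta(q^{t-1}-1)/(q-1)$ points (a minihyper), maximality prevents the hole set from containing a whole $(t-1)$-flat, and the characterization of such minihypers for small $\delta$ as sums of $(t-1)$-flats reduces, via projection, to the lower bounds of Bruen, of Blokhuis--Ball--Brouwer--Storme--Sz\H{o}nyi, and of Blokhuis for nontrivial blocking sets of $\PG(2,q)$; your arithmetic translating those three bounds into (i)--(iii) is correct. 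What you have written is, however, only a plan: the step you yourself flag as the main obstacle --- verifying that the hole set is a minihyper with the stated parameters, that the projection yields a line-free blocking set of the claimed size, and that the argument survives for general $k$ --- is precisely the technical content of \cite{goeverts} and is not carried out in your sketch (your remark that hyperplane intersections are ``congruent modulo $q$'' to a forced value is also not quite the property one needs; the operative condition is the lower bound on hyperplane intersections). Since the theorem appears in the paper only as a quoted known result, citing \cite{goeverts} as you do is the appropriate level of justification here; a self-contained proof would require substantially more than the outline you give.
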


If a partial $t$-spread $\mathcal S$ in $V(2t,q)$ cannot be completed to a spread, then $\mathcal S$ will be contained in a maximal partial spread ${\mathcal S}'$ with nonzero deficiency. A consequence of Proposition \ref{corol:1} will thus be: 
\begin{cor}\label{corol:3}
Assume that there is a partition $\mathcal{P}$ of $V=V(2t,q)$ of
which $q^{t}+1-a$ of the subspaces in $\mathcal{P}$ have dimension
$t$.
Let the number of spaces
of dimension $d$, with $t/2 < d < t$, in $\mathcal P$ be denoted by $m_d$, and assume that $m_d\leq q^{\lceil d/2\rceil}+1$.

If $a<m_d$,
then there exists a maximal partial $t$-spread in $V$ with a deficiency
less than or equal to $a$, but larger than $0$. \end{cor}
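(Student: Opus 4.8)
The plan is to combine Proposition~\ref{corol:1} with the easy observation that a partial $t$-spread failing to extend to a full spread sits inside a maximal one. Concretely, let $\mathcal P$ be the given partition of $V(2t,q)$, and let $\mathcal S_0$ denote the collection of the $q^t+1-a$ subspaces of dimension $t$ in $\mathcal P$. Since the members of $\mathcal P$ meet pairwise only in the zero vector, $\mathcal S_0$ is a partial $t$-spread in $V$ of cardinality $q^t+1-a$; note its deficiency (in the sense defined before Theorem~\ref{thm:10}, with $k=2$) is exactly $a$.

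The first step is to argue that $\mathcal S_0$ cannot be completed to a $t$-spread of $V$. This is where Proposition~\ref{corol:1} does the work: that proposition says that \emph{if} the $t$-dimensional spaces of $\mathcal P$ can be completed to a $t$-spread, then $a\geq\min(m_d,q^{\lceil d/2\rceil}+1)$. Under the hypotheses of the corollary we have $m_d\leq q^{\lceil d/2\rceil}+1$, so $\min(m_d,q^{\lceil d/2\rceil}+1)=m_d$, and hence the conclusion of Proposition~\ref{corol:1} becomes $a\geq m_d$. But we are assuming $a<m_d$, a contradiction. Therefore $\mathcal S_0$ admits no completion to a $t$-spread.

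The second step is to pass from ``not completable'' to the existence of a maximal partial $t$-spread of the asserted deficiency. Among all partial $t$-spreads of $V$ containing $\mathcal S_0$, choose one of maximum cardinality, say $\mathcal S'$; such a maximum exists because the number of $t$-subspaces of $V$ is finite and $\mathcal S_0$ itself is a candidate. By maximality, every $t$-subspace of $V$ meets some member of $\mathcal S'$ nontrivially, so $\mathcal S'$ is a maximal partial $t$-spread. Since $\mathcal S_0\subseteq\mathcal S'$ we have $|\mathcal S'|\geq|\mathcal S_0|=q^t+1-a$, so the deficiency $\delta'=q^t+1-|\mathcal S'|$ satisfies $\delta'\leq a$. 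Finally, $\delta'>0$: if $\delta'=0$ then $\mathcal S'$ would be a full $t$-spread completing $\mathcal S_0$, contradicting Step~1. This produces the desired maximal partial $t$-spread with $0<\delta'\leq a$.

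I do not anticipate a genuine obstacle here, since the statement is explicitly flagged in the excerpt as ``a consequence of Proposition~\ref{corol:1}''; the real content is entirely packaged into that proposition, whose proof (splitting into the case where all $d$-spaces lie inside spread members versus the case using Proposition~\ref{pro:1} and Proposition~\ref{pro:card-part-dep-dimension}) is the substantive part. The only point requiring a moment's care is the bookkeeping that makes $\min(m_d,q^{\lceil d/2\rceil}+1)$ collapse to $m_d$ under the hypothesis $m_d\leq q^{\lceil d/2\rceil}+1$, and the observation that the deficiency of $\mathcal S_0$ as a partial $t$-spread coincides with the parameter $a$; both are immediate once the definitions are unwound.
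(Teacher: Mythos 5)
Your proposal is correct and follows exactly the paper's route: the paper derives this corollary from Proposition~\ref{corol:1} (in contrapositive, with the hypothesis $m_d\leq q^{\lceil d/2\rceil}+1$ collapsing the minimum to $m_d$) together with the remark that a non-completable partial $t$-spread lies in a maximal one of positive deficiency at most $a$. Your write-up merely makes explicit the bookkeeping the paper leaves implicit.
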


\subsection{A lower bound on $a$}\label{sec:4.2}

Define 
\[
\mathrm{R}_q(t,d,m):=m(m-1)\frac{\frac{1}{2}(q^{2t-2d}-1)+1-q^{t-d}}{q^{t}-1-m(q^{t-d}-1)}\,.
\]

\begin{thm}\label{thm:lowbound-a} Assume that there is a partition of $V(2t,q)$ of
type $(m_t,\ldots,m_1)$, with $m_t=q^t+1-a$. Let $d<t$ such that $m_d>0$. 

If \begin{equation*}
m_d<\frac{q^{t}-1}{q^{t-d}-1}\,,\end{equation*}
 then \begin{equation*}
a\geq m_d-\mathrm{R}_q(t,d,m_d)\,.\end{equation*}
 \end{thm}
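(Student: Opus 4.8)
The plan is to exploit the hyperplane-counting identities of Corollary~\ref{cor:1} (together with Equation~\eqref{eq:sum-sb-0}) for the two relevant dimensions $t$ and $d$, and then to feed them into a single, manifestly nonnegative sum over all hyperplanes. Throughout set $n=2t$, and for a hyperplane $H$ of type $b$ write $b_t$ and $b_d$ for the numbers of $t$- and $d$-dimensional members of $\mathcal P$ contained in $H$. The first thing I would record is that the dimension condition forces $b_t\in\{0,1\}$: two distinct $t$-spaces span all of $V(2t,q)$, so they cannot lie in a common hyperplane. I would then collect the five facts I need, all instances of Corollary~\ref{cor:1} and Equation~\eqref{eq:sum-sb-0} applied to the dimension pair $\{d,t\}$: namely $\sum_b s_b=(q^{2t}-1)/(q-1)$, $\sum_b b_t s_b=m_t\,h(t;2t,q)$, $\sum_b b_d s_b=m_d\,h(d;2t,q)$, $\sum_b b_tb_d s_b=m_tm_d\,h(t+d;2t,q)$, and $\sum_b\binom{b_d}{2}s_b=\binom{m_d}{2}h(2d;2t,q)$. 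Each $h$-value here is strictly positive because $d<t$, so all the relevant arguments of $h$ are less than $n=2t$.

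The heart of the argument is the observation that the quadratic weight $f(b_t,b_d):=(1-b_t)(1-b_d)+\binom{b_d}{2}$ is nonnegative on every hyperplane. When $b_t=1$ it equals $\binom{b_d}{2}\ge0$; when $b_t=0$ it equals $1-b_d+\binom{b_d}{2}=\binom{b_d-1}{2}\ge0$. Both evaluations are legitimate precisely because $b_t\in\{0,1\}$ and $b_d$ is a nonnegative integer. Hence $\sum_b f(b_t,b_d)\,s_b\ge0$. Expanding $f=1-b_t-b_d+b_tb_d+\binom{b_d}{2}$ and inserting the five identities converts this one inequality into a polynomial inequality in $m_t$, $m_d$, and the $h$-values.

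To finish I would substitute $m_t=q^t+1-a$ and simplify, abbreviating $A=h(t;2t,q)$, $C=h(t+d;2t,q)$, and $E=h(2d;2t,q)$. The key algebraic identity that makes things collapse is $q^t\,C+A=h(d;2t,q)$, i.e. $q^t\frac{q^{t-d}-1}{q-1}+\frac{q^t-1}{q-1}=\frac{q^{2t-d}-1}{q-1}$, which absorbs the three terms that are linear in $m_d$ into the single term $\sum_b b_d s_b$. After this cancellation the inequality reduces to $a\,(A-m_dC)\ge m_d(A-C)-\binom{m_d}{2}E$. The hypothesis $m_d<(q^t-1)/(q^{t-d}-1)$ is exactly the statement that $A-m_dC=\bigl(q^t-1-m_d(q^{t-d}-1)\bigr)/(q-1)$ is positive, so I may divide by it without reversing the inequality; a short computation then identifies the right-hand side with $m_d-\mathrm{R}_q(t,d,m_d)$, using the reformulation $\mathrm{R}_q(t,d,m)=\binom{m}{2}(E-2C)/(A-mC)$ of the defining expression.

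The only genuinely creative step is guessing the weight $f(b_t,b_d)$, and I expect that to be the main obstacle: one must recognize that the target bound is equivalent to a termwise-nonnegative combination of exactly those hyperplane moments that Corollary~\ref{cor:1} provides, and in particular that the factor $(1-b_t)$ is what lets the $\binom{b_d}{2}$-term cooperate with the constraint $b_t\le1$. Everything after this discovery is forced. The remaining issues are purely bookkeeping: checking that all $h$-values are positive (guaranteed by $d<t$) and that the division by $A-m_dC$ is by a positive quantity (guaranteed by the hypothesis).
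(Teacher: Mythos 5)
Your proof is correct and is in essence the paper's own argument: both rest on the same moment identities from Corollary~\ref{cor:1} and Equation~\eqref{eq:sum-sb-0} for the dimension pair $\{d,t\}$ together with the observation $b_t\in\{0,1\}$, and both reduce to the identical final inequality $a\frac{q^{t}-1}{q-1}\geq m_{d}\frac{q^{2t-d}-1}{q-1}-\binom{m_{d}}{2}\frac{q^{2t-2d}-1}{q-1}-m_{t}m_{d}\frac{q^{t-d}-1}{q-1}$ before dividing by the positive quantity $q^t-1-m_d(q^{t-d}-1)$. The only difference is one of packaging: where you verify pointwise nonnegativity of the single weight $(1-b_t)(1-b_d)+\binom{b_d}{2}$, the paper reaches the same linear combination of moments by sandwiching $\sum_{b_t=0,\ b_d\in\{1,2\}}s_b$ between an upper bound of $a\frac{q^t-1}{q-1}$ and a lower bound obtained from the inequality $i-\binom{i}{2}\leq 1$; your version is arguably the more transparent of the two.
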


\begin{proof} 
As no two spaces of dimension $t$ in the partition can be contained
in the same hyperplane, $s_{b}\neq0$ implies $b_{t}\in\{0,1\}$.
Hence, from Equation~\eqref{eq:sum-sb-i}, we obtain 
\begin{equation*}
m_{t}\frac{q^{t}-1}{q-1}=\sum_{\substack{b\in B\\
b_{t}=1}}s_{b},
\end{equation*}
 and thus by Equation~\eqref{eq:sum-sb-0}, we have \begin{equation*}
\sum_{\substack{b\in B\\
b_{t}=0}}s_{b}=\frac{q^{2t}-1}{q-1}-m_{t}\frac{q^{t}-1}{q-1}=a\frac{q^{t}-1}{q-1}\end{equation*}
and in particular, \begin{align}
\sum_{\substack{b\in B\\
b_{t}=0\\
b_{d}\in\{1,2\}}}s_{b} & \leq a\frac{q^{t}-1}{q-1}.\label{eq:6}\end{align}

As $i-\binom{i}{2}$ is 1 if $i\in\{1,2\}$ and at most 0 otherwise,\[
\sum_{b\in B}b_{d}s_{b}-\sum_{b\in B}\binom{b_{d}}{2}s_{b}\leq\sum_{\substack{b\in B\\
b_{d}\in\{1,2\}}
}s_{b}\]
 holds. We also have\[
\sum_{\substack{b\in B\\
b_{d}\in\{1,2\}\\
b_{t}=1}
}s_{b}\leq\sum_{\substack{b\in B\\
b_{t}=1}
}b_{d} s_{b}=\sum_{b\in B}b_{d}b_{t}s_{b}.\]

So the above two equations together with Equations~\eqref{eq:sum-sb-i},
\eqref{eq:sum-sb-2i} and \eqref{eq:sum-sb-i-j} give \[
\sum_{\substack{b\in B\\
b_{d}\in\{1,2\}\\
b_{t}=0}
}s_{b}\geq m_{d}\frac{q^{2t-d}-1}{q-1}-\binom{m_{d}}{2}\frac{q^{2t-2d}-1}{q-1}-m_{d}m_{t}\frac{q^{t-d}-1}{q-1}.\]
 Now, combining this equation with Equation~\eqref{eq:6}
and using the fact that $m_{t}=q^{t}+1-a$, we obtain \[
a\frac{q^{t}-1}{q-1}\geq m_{d}\frac{q^{2t-d}-1}{q-1}-\binom{m_{d}}{2}\frac{q^{2t-2d}-1}{q-1}-(q^{t}+1-a)m_{d}\frac{q^{t-d}-1}{q-1},\]
 whence the statement of the theorem follows. 
\end{proof}

In the next example we evaluate the bound on $a$ that we get from Theorem \ref{thm:lowbound-a} in case $d$ is close to $t$.

\begin{example}\label{ex:12} Let $d=t-k$. We claim that if 
\begin{equation}
m_{d}\leq \sqrt{2}q^{(t-2k)/2}\,,\label{eq:sqrt}\end{equation}
then \begin{equation}
a\geq m_{d}\,\label{eq:ageq}.\end{equation}
To prove this for the nontrivial case $t/2 < d$, we note that from Theorem \ref{thm:lowbound-a} we get \begin{equation*}
a\geq m_d-\mathrm{R}_q(t,d,m_d)= m_d-m_d(m_d-1)\frac{\frac{1}{2}(q^{2k}-1)+1-q^{k}}{q^{t}-1-m_d(q^{k}-1)}\,.
\end{equation*}
Trivial calculations show that $\mathrm{R}_q(t,d,m_d)<1$ if and only if
\[
 m_d(m_d-1)\frac{1}{2}(q^{2k}-1)+m_d(m_d-2)(1-q^k)<q^t-1\,.
\]
The cases $m_d=1$ and $m_d=2$ are trivial, so we may assume that $m_d\geq3$.
Then the left side of the inequality above is less than
\[
 m_d(m_d-1)\frac{1}{2}(q^{2k}-1)\,,
\]
which is less than 
\[
m_d^2\frac{1}{2}(q^{2k}-1)\,.
\]
Hence, Equation \eqref{eq:ageq} now follows from the condition in Equation \eqref{eq:sqrt}.
\end{example}

The next example shows the consequences of Theorem \ref{thm:lowbound-a} in one of the smallest possible cases.
\begin{example}\label{ex:lowbound-a} Let $q=2$, $t=4$, and $d=3$. We
then get 
\begin{equation}
\mathrm{R}_2(4,3,m_{3})=m_3(m_3-1)\frac{1/2}{15-m_3}\,.\label{eq:formel}\end{equation}
For $m_3\leq4$ this implies $a\geq m_3$ by Theorem~\ref{thm:lowbound-a}. 

If $m_3=5$, then we can from the above formula just deduce that $a\geq 4$. If there were a vector space partition $\mathcal P$ of type $(13,5,0,25)$ in $V(8,2)$, then, by Corollary \ref{corol:3}, the spaces of dimension 4 in $\mathcal P$ either would constitute a maximal partial 4-spread in $V(8,2)$ with deficiency $\delta=4$ or could be completed to a maximal partial 4-spread with deficiency $\delta=3$. So we cannot use Theorem \ref{thm:10} to exclude this type of partition. However, we can exclude the existence of this type of partition using the necessary conditions in Section \ref{sec:3}, as from \cite{latte} the corresponding polytope contains no integer points.

If $6\leq m_3\leq 8$, then we get from Equation \eqref{eq:formel} and Theorem \ref{thm:lowbound-a} that $a\geq 5$. Thus, no partition of type $(13,6,0,18)$ of $V(8,2)$ exists.
In Section \ref{sec:5} we will show that there actually exist vector space partitions of type $(12,6,0,33)$ in $V(8,2)$
as well as of types $(12,7,0,26)$ and $(12,8,0,19)$, that is, with $a=5$.

For $m_3=9$, the above formula gives $a\geq 4$, which shows that for $m_3$ large, Theorem \ref{thm:lowbound-a} is not stronger than the packing condition, as, most trivially, there is no vector space partition of type $(13,9,0,-3)$ in $V(8,2)$.
\end{example}

In any case, the example above shows that our new necessary conditions in Section \ref{sec:3} complete the necessary conditions mentioned in Section \ref{sec:1} and Section \ref{sec:2}, as the existence of a partition of type $(13,6,0,18)$ in $V(8,2)$ is not possible to exclude neither by the first packing condition nor by the dimension condition nor by the tail condition.

The following corollary is an immediate consequence of Theorem \ref{thm:lowbound-a}: 
\begin{cor}
Assume that $V(2t,q)$ has a partition $\mathcal{P}$
of type $(m_t,\ldots,m_1)$ with $m_t=q^t+1-a$. Let $d$ be any integer with $t/2<d<t$ and let $m$ be the number of spaces of $\mathcal{P}$ of a dimension at least equal to $d$ and less than $t$.
If \begin{equation*}
m<\frac{q^{t}-1}{q^{t-d}-1}\,,\end{equation*}
 then \begin{equation*}
a\geq m-\mathrm{R}_q(t,d,m)\,.\end{equation*}
 \end{cor}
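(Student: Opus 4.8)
The plan is to reduce this statement to Theorem~\ref{thm:lowbound-a} by treating the $m$ spaces of dimension at least $d$ (and less than $t$) \emph{as if} they were all of dimension exactly $d$. The intuition is that grouping several medium-dimensional spaces of various dimensions into a single count $m$ should only make the obstruction stronger, since higher-dimensional spaces occupy more hyperplanes and interact more restrictively with the $t$-spaces. So I would try to re-run the double-counting argument of Theorem~\ref{thm:lowbound-a} with $b_d$ replaced by the aggregate count $B_d := \sum_{d\le e<t} b_e$ of all medium-dimensional members of $\mathcal P$ lying in a hyperplane $H$, and with $m$ in place of $m_d$.

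Concretely, I would revisit the three key inputs from Corollary~\ref{cor:1} and the inequality chain in the proof of Theorem~\ref{thm:lowbound-a}. First I would establish the analogue of Equation~\eqref{eq:sum-sb-i}, namely $\sum_{b\in B} B_d\, s_b = m\, h(d;2t,q)$, except that here each dimension $e\ge d$ contributes $h(e;2t,q)\le h(d;2t,q)$, so summing Equation~\eqref{eq:sum-sb-i} over $d\le e<t$ gives $\sum_{b\in B} B_d\, s_b \le m\, h(d;2t,q)$ --- an inequality in the favorable direction. Next, for the pair term I would bound $\sum_{b\in B}\binom{B_d}{2} s_b$ from above. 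Since $\binom{B_d}{2}=\sum_{e<e'}b_e b_{e'}+\sum_e\binom{b_e}{2}$, every summand is a product $b_e b_{e'}$ or $\binom{b_e}{2}$ with both dimensions at least $d$; applying Equations~\eqref{eq:sum-sb-2i} and \eqref{eq:sum-sb-i-j} and using $h(e+e';2t,q)\le h(2d;2t,q)$ (valid because $e+e'\ge 2d$) should yield $\sum_{b\in B}\binom{B_d}{2} s_b \le \binom{m}{2} h(2d;2t,q)$. Finally, for the cross term with the $t$-spaces, I would sum Equation~\eqref{eq:sum-sb-i-j} over $d\le e<t$ and use $h(e+t;2t,q)\ge$ \emph{nothing}, so here I must be careful: I need a \emph{lower} bound on $\sum_{b} B_d b_t s_b$, and $h(e+t;2t,q)$ decreases in $e$, so the worst case is $e=t-1$, giving the bound $h((t-1)+t;2t,q)$, which is too weak. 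Instead I would keep $h(d+t;2t,q)$ as the coefficient by noting that this term enters with a minus sign in the final inequality, so I want it as small as possible, and $h(e+t;2t,q)\le h(d+t;2t,q)$ works in my favor.

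Assembling these, the same manipulation as in Theorem~\ref{thm:lowbound-a} --- using that $i-\binom{i}{2}\le 1$ for all integers $i$, so that the count of hyperplanes with $b_t=0$ and $1\le B_d\le 2$ is bounded below by $\sum_b B_d s_b-\sum_b\binom{B_d}{2}s_b-\sum_b B_d b_t s_b$ --- should reproduce, with $m$ in place of $m_d$, exactly the inequality $a\frac{q^t-1}{q-1}\ge m\frac{q^{2t-d}-1}{q-1}-\binom{m}{2}\frac{q^{2t-2d}-1}{q-1}-(q^t+1-a)m\frac{q^{t-d}-1}{q-1}$, from which $a\ge m-\mathrm{R}_q(t,d,m)$ follows algebraically just as before.

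The step I expect to be the main obstacle is verifying that the inequality $i-\binom{i}{2}\le 1$ remains usable when $i=B_d$ can now be large (it can exceed $2$, since several medium spaces may share a hyperplane), \emph{and} simultaneously that the substitutions $h(e;\cdot)\le h(d;\cdot)$, $h(e+e';\cdot)\le h(2d;\cdot)$, $h(e+t;\cdot)\le h(d+t;\cdot)$ all point in the correct direction so that no term flips sign against me. In particular I must confirm that aggregating spaces of different medium dimensions into one block does not introduce a positive error term anywhere; the cleanest way to guarantee this is to check that every bound I use on an $h$-value is monotone in the dimension and lands on the coefficient with the sign that makes the substitution loosen (never tighten) the chain. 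Because all the relevant dimensions lie strictly between $t/2$ and $t$, the hypothesis $m<\frac{q^t-1}{q^{t-d}-1}$ keeps the denominator $q^t-1-m(q^{t-d}-1)$ positive, so the final algebraic rearrangement is valid, exactly mirroring Theorem~\ref{thm:lowbound-a}.
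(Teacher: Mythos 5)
Your plan diverges from the paper's, and it contains a genuine gap at the very first step of the aggregation. In the chain of Theorem~\ref{thm:lowbound-a}, the first-moment term enters with a \emph{positive} sign: one needs
\[
\sum_{\substack{b\in B\\ B_d\in\{1,2\},\, b_t=0}} s_b \;\geq\; \sum_{b\in B} B_d s_b-\sum_{b\in B}\binom{B_d}{2}s_b-\sum_{b\in B}B_d b_t s_b
\;\geq\; m\,h(d;2t,q)-\binom{m}{2}h(2d;2t,q)-m\,m_t\,h(d+t;2t,q),
\]
so the last step requires a \emph{lower} bound $\sum_b B_d s_b\geq m\,h(d;2t,q)$. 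But summing Equation~\eqref{eq:sum-sb-i} over $d\leq e<t$ gives $\sum_b B_d s_b=\sum_e m_e h(e;2t,q)\leq m\,h(d;2t,q)$, with the inequality strict as soon as some $m_e>0$ with $e>d$, because $h$ is decreasing in the dimension (a space of larger dimension lies in \emph{fewer} hyperplanes). You assert this upper bound is ``in the favorable direction''; it is exactly the unfavorable direction, and the argument breaks there. (Your treatment of the second-moment and cross terms is fine, since those do enter with minus signs and $h(e+e')\leq h(2d)$, $h(e+t)\leq h(d+t)$ give the needed upper bounds; and one can check that the cross-term gain only partially offsets the first-moment deficit, so the loss is not recoverable by bookkeeping.)

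The paper's proof sidesteps this entirely and is essentially one line: from $\mathcal{P}$ derive a new partition $\mathcal{P}'$ by splitting each member of dimension $e$ with $d<e<t$ into one subspace of dimension $d$ and $(q^e-q^d)/(q-1)$ subspaces of dimension $1$. Then $\mathcal{P}'$ is an honest vector space partition of $V(2t,q)$ with the same number of $t$-spaces and with exactly $m$ spaces of dimension $d$, and Theorem~\ref{thm:lowbound-a} applies verbatim. The reason this works where your aggregation fails is that the chosen $d$-dimensional subspace of a larger space lies in the full complement of $h(d;2t,q)$ hyperplanes, restoring the first-moment equality that your $B_d$ cannot achieve. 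If you want to salvage your approach, this refinement step is the missing idea.
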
 
\begin{proof}
We derive a partition ${\mathcal{P}}'$ from
${\mathcal{P}}$ simply by partitioning each space $W$ of $\mathcal{P}$ of dimension
larger than $d$ and less than $t$ into one space of dimension
$d$ and the other spaces of dimension 1, for example. The partition $\mathcal{P}'$ has $m$ spaces of dimension $d$. 

Now apply Theorem \ref{thm:lowbound-a}.
\end{proof}

\section{Constructions and examples}\label{sec:5}

We will discuss whether or not the bounds given in Proposition \ref{pro:card-part-dep-dimension} and Theorem \ref{thm:lowbound-a} are tight. For that purpose, we will use two explicit constructions of vector space partitions. 

Both constructions originate in the following $t$-spread $\mathcal{P}$ of $V(kt,q)$, given in \cite{bu}: 

Identify $V(kt,q)$ with the finite field $\GF(q^{kt})$. This field has a subfield with $q^t$ elements, which we denote by $\GF(q^t)$. Now, simply, let
\[
\mathcal{S}=\{\,\beta \GF(q^t)\colon\,\beta\in \GF(q^{kt})\,\}\,.
\]
Note that 
\begin{equation*}
\beta \GF(q^t)=\beta'\GF(q^t)\qquad\Longleftrightarrow\qquad\beta'\beta^{-1}\in \GF(q^t)\,.
\end{equation*}

The constructions below will also provide us with examples of partitions $\mathcal P$, as described in the beginning of Section \ref{sec:4}, such that $a<m_d$. 
All vector space partitions $\mathcal P$ of $V(kt,q)$ that we will construct in this section have the property that the 
spaces of dimension $t$ in $\mathcal P$ are contained in the $t$-spread $\mathcal{S}$.

\subsection{Construction I}

Let $A=\{\alpha_1,\alpha_2, \dots, \alpha_l\}$
 be a set of elements of $\GF(q^{kt})$ that are linearly independent over $\GF(q^t)$. 

We will henceforth consider $\GF(q^{kt})$ as a vector space over $\GF(q)$ and we will let $\bar A$ denote the linear span of the elements of $A$, without the zero vector. We call two elements $\gamma,\gamma'$ of $\GF(q^{kt})$ \emph{parallel}, if $\gamma/\gamma'\in \GF(q)$.
\begin{lem}\label{lem:parallel-classes}
For any two non parallel elements $\bar\alpha$ and $\bar\alpha'$ of $\bar A$, 
\[
\bar\alpha \GF(q^t)\cap \bar\alpha'\GF(q^t)=\{0\}\,.
\]
\end{lem}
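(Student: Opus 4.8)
The plan is to unwind the definitions and reduce the claim to a statement about linear independence over $\GF(q^t)$. Recall that $\bar A$ is the set of nonzero vectors in the $\GF(q)$-linear span of $A=\{\alpha_1,\dots,\alpha_l\}$, where the $\alpha_i$ are linearly independent \emph{over} $\GF(q^t)$. So any $\bar\alpha\in\bar A$ has the shape $\bar\alpha=\sum_{i=1}^l c_i\alpha_i$ with $c_i\in\GF(q)$, not all zero. First I would observe that a nonzero element $v$ lies in the intersection $\bar\alpha\GF(q^t)\cap\bar\alpha'\GF(q^t)$ exactly when there exist $\lambda,\mu\in\GF(q^t)$ (necessarily both nonzero, since $v\neq0$ and the $\alpha_i$ are nonzero) with
\begin{equation*}
\bar\alpha\lambda=\bar\alpha'\mu\,.
\end{equation*}
Thus a nontrivial intersection is equivalent to the existence of nonzero $\lambda,\mu\in\GF(q^t)$ satisfying this relation.

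The key step is then to show that such a relation forces $\bar\alpha$ and $\bar\alpha'$ to be parallel. Writing $\bar\alpha=\sum_i c_i\alpha_i$ and $\bar\alpha'=\sum_i c_i'\alpha_i$ with $c_i,c_i'\in\GF(q)$, the relation $\bar\alpha\lambda=\bar\alpha'\mu$ becomes
\begin{equation*}
\sum_{i=1}^l(c_i\lambda-c_i'\mu)\alpha_i=0\,.
\end{equation*}
Here is the crucial point: each coefficient $c_i\lambda-c_i'\mu$ is an element of $\GF(q^t)$, because $c_i,c_i'\in\GF(q)\subseteq\GF(q^t)$ and $\lambda,\mu\in\GF(q^t)$. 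Since the $\alpha_i$ are linearly independent over $\GF(q^t)$, every coefficient must vanish, giving $c_i\lambda=c_i'\mu$ for all $i$. As some $c_i\neq0$ (because $\bar\alpha\neq0$) and $\lambda\neq0$, the corresponding equation forces $c_i'\neq0$ as well, and we may solve to get $\mu/\lambda=c_i/c_i'\in\GF(q)$. Substituting back yields $c_i'=(\lambda/\mu)c_i$ for every index, i.e.\ $\bar\alpha'=(\lambda/\mu)\bar\alpha$ with $\lambda/\mu\in\GF(q)$, which says precisely that $\bar\alpha$ and $\bar\alpha'$ are parallel.

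To finish, I would take the contrapositive: if $\bar\alpha$ and $\bar\alpha'$ are \emph{not} parallel, then no such relation can hold, so the intersection $\bar\alpha\GF(q^t)\cap\bar\alpha'\GF(q^t)$ contains no nonzero vector and hence equals $\{0\}$. The main obstacle—though a mild one—is the bookkeeping around which scalars live in which field: the entire argument hinges on the observation that $c_i\lambda-c_i'\mu\in\GF(q^t)$, so that the $\GF(q^t)$-linear independence of the $\alpha_i$ can be invoked. Once that is cleanly stated, the rest is routine. I would also note in passing that $\lambda$ and $\mu$ are genuinely nonzero (which is needed to divide), a point that follows immediately from $v\neq0$ together with the fact that $\bar\alpha,\bar\alpha'$ are nonzero elements of a field.
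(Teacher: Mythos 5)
Your proof is correct and follows essentially the same route as the paper's: both expand $\bar\alpha$ and $\bar\alpha'$ in terms of the $\alpha_i$ and use their linear independence over $\GF(q^t)$ to force the relevant scalar ratio into $\GF(q)$, contradicting non-parallelism. The only cosmetic difference is that the paper first reduces a nontrivial intersection to $\bar\alpha\GF(q^t)=\bar\alpha'\GF(q^t)$, i.e.\ $\bar\alpha=\gamma\bar\alpha'$ for a single $\gamma\in\GF(q^t)$, whereas you carry the two scalars $\lambda,\mu$ through the computation.
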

We note that if $\bar\alpha$ and $\bar\alpha'$ are parallel, then $\bar\alpha \GF(q^t)=\bar\alpha'\GF(q^t)$, since $\GF(q)\subset \GF(q^t)$.

\begin{proof}
If the conclusion of the lemma does not hold, then $\bar\alpha \GF(q^t)=\bar\alpha'\GF(q^t)$, and so for some $\gamma\in \GF(q^t)$ we have
\[
\sum_{i=1}^l s_i\alpha_i=\bar\alpha=\gamma\bar\alpha'=\gamma\sum_{i=1}^l s_i'\alpha_i,
\] 
with $s_i,s_i'\in \GF(q)$.
As the elements of $A$ are linearly independent, we get
\[
s_i=\gamma s_i'\neq 0,
\]
for at least one $i$; hence, $\gamma$ is in $\GF(q)$.
\end{proof}

\begin{lem}\label{lem:bar-U_i-subspaces}
For any 1-dimensional subspace $U$ of $\GF(q^t)$, the set $\bar U$ given by
\[
\bar U=\cup_{\bar \alpha\in \bar A} \bar\alpha U\,
\]
is an $l$-dimensional subspace of $V=\GF(q^{kt})$.
\end{lem}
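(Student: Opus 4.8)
The plan is to recognize $\bar U$ as the image of a single fixed $l$-dimensional subspace under multiplication by a field element, which is a $\GF(q)$-linear automorphism of $\GF(q^{kt})$; this sidesteps any direct verification of closure under addition.

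First I would fix a nonzero $u\in U$, so that $U=\{cu\colon c\in \GF(q)\}$, and let $W$ denote the $\GF(q)$-linear span of $A$. Since $A$ is linearly independent over $\GF(q^t)$ and $\GF(q)\subseteq \GF(q^t)$, any $\GF(q)$-linear relation among its elements is also a $\GF(q^t)$-linear relation, so $A$ is linearly independent over $\GF(q)$; hence $W$ is an $l$-dimensional $\GF(q)$-subspace of $\GF(q^{kt})$ and $\bar A=W\setminus\{0\}$. I would then rewrite the defining union as $\bar U=\{\bar\alpha c u\colon \bar\alpha\in\bar A,\ c\in \GF(q)\}$. The inner set $\{\bar\alpha c\colon \bar\alpha\in\bar A,\ c\in \GF(q)\}$ equals $W$: the value $0$ is obtained with $c=0$, and for $c\neq0$ the product $\bar\alpha c$ runs through all of $\bar A$ because $W$ is closed under nonzero scalar multiplication. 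Therefore $\bar U=\{wu\colon w\in W\}=Wu$.

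Second, the map $\mu_u\colon x\mapsto xu$ on $\GF(q^{kt})$ is $\GF(q)$-linear, because field multiplication distributes over addition and commutes with the scalars of $\GF(q)\subseteq \GF(q^{kt})$, and it is bijective because $u\neq0$ in the field. Thus $\bar U=\mu_u(W)$ is the image of an $l$-dimensional $\GF(q)$-subspace under a linear automorphism, and is consequently itself an $l$-dimensional $\GF(q)$-subspace of $V=\GF(q^{kt})$, as claimed.

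The only delicate point is the collapse of the union to $Wu$: one must observe that, since $U$ is $\GF(q)$-invariant, the scalar $c$ together with the $\GF(q)$-multiples of the $\bar\alpha$ exactly sweep out the full span $W$ (the parallel elements of $\bar A$ being precisely those that produce the repeated sets $\bar\alpha U$, in the language preceding Lemma \ref{lem:parallel-classes}). Once this is in place, linearity of $\mu_u$ does all the remaining work. Alternatively, one could avoid the multiplicative trick and argue directly: for $x=\bar\alpha c u$ and $y=\bar\beta c' u$ in $\bar U$ one has $x+y=(\bar\alpha c+\bar\beta c')u$ with $\bar\alpha c+\bar\beta c'\in W$, giving closure under addition, while the cardinality $|\bar U|=q^l$ follows from Lemma \ref{lem:parallel-classes}, since non-parallel $\bar\alpha$ contribute nonzero parts meeting only in $0$.
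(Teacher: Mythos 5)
Your proof is correct and is essentially the paper's argument: both rest on the observation that $\bar U=Wu$ where $W$ is the $\GF(q)$-span of $A$ (of dimension $l$, since $\GF(q^t)$-independence implies $\GF(q)$-independence) and multiplication by the fixed generator $u\neq0$ is an injective $\GF(q)$-linear map; the paper phrases this as a direct closure check $v+rv'=(\bar\alpha+r\bar\alpha')u$ followed by exhibiting the basis $\{\alpha_1u,\dots,\alpha_lu\}$. If anything, your packaging via the automorphism $x\mapsto xu$ is slightly tidier, as it silently handles the degenerate cases (the scalar $c=0$ from $U$ and the possibility $\bar\alpha+r\bar\alpha'=0$) that the paper's closure computation glosses over.
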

\begin{proof}
Let $u$ be a generator of $U$. If $v$ and $v'$ belong to $\bar U$, then there are $\bar\alpha,\bar\alpha'\in\bar A$ with
 $v=\bar\alpha u$ and $v'=\bar \alpha'u$. Hence, for $r\in \GF(q)$,
\[
v+rv'=(\bar \alpha+r\bar \alpha')u=\bar \alpha''u\,,
\]
where $\bar\alpha''$ is an element of $\bar A$. So 
$v+rv'$ belongs to $\bar\alpha''U$.

It can be easily seen that $\{\alpha_1 u,\alpha_2 u,\ldots, \alpha_l u\}$ is a basis of $\bar{U}$.
\end{proof}

\begin{lem}
Let the subspace $W$ together with the 1-dimensional subspaces $U_i$, for $i\in I$, constitute a vector space partition of $\GF(q^t)$. Then the spaces $\bar \alpha W$, for $\bar\alpha\in \bar A$, together with the spaces $\bar U_i$, where
\[
\bar U_i = \cup_{\bar \alpha\in \bar A} \bar\alpha U_i,
\]
for $i\in I$, will constitute a vector space partition of the set \[\cup_{\bar\alpha\in \bar A}\bar\alpha \GF(q^t)\,.\]
\end{lem}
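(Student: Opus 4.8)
The plan is to read off the claimed partition block by block from the ambient $t$-spread $\mathcal S$. The union $\bigcup_{\bar\alpha\in\bar A}\bar\alpha\GF(q^t)$ is a union of spread members, and by Lemma~\ref{lem:parallel-classes} together with the remark following it, two such members $\bar\alpha\GF(q^t)$ and $\bar\alpha'\GF(q^t)$ either coincide (when $\bar\alpha,\bar\alpha'$ are parallel) or meet only in $0$; so the distinct members, one per parallel class, partition the union into \emph{blocks}. First I would observe that for a fixed $\bar\alpha$ the map $x\mapsto\bar\alpha x$ is a $\GF(q)$-linear bijection of $\GF(q^{kt})$ sending $\GF(q^t)$ onto the block $\bar\alpha\GF(q^t)$; it therefore carries the given partition $\{W\}\cup\{U_i\}_{i\in I}$ of $\GF(q^t)$ to a partition $\{\bar\alpha W\}\cup\{\bar\alpha U_i\}_{i\in I}$ of that block. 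The whole argument then amounts to checking that regrouping the fine pieces $\bar\alpha U_i$ across all blocks into the single sets $\bar U_i=\bigcup_{\bar\alpha\in\bar A}\bar\alpha U_i$ still yields a partition.

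For the covering property, every nonzero $v$ in the union lies in a unique block, so $v=\bar\alpha y$ with $y\in\GF(q^t)\setminus\{0\}$, and since $y$ lies in exactly one of $W$ or the $U_i$, the vector $v$ lies in $\bar\alpha W$ or in $\bar\alpha U_i\subseteq\bar U_i$. That the proposed members are subspaces of positive dimension is already available: each $\bar U_i$ is the $l$-dimensional subspace of Lemma~\ref{lem:bar-U_i-subspaces}, and each $\bar\alpha W$ is the image of $W$ under an invertible linear map, hence a subspace of dimension $\dim W>0$. I would also note here that parallel representatives yield the same space $\bar\alpha W$, so these blocks are effectively indexed by the distinct spread members, with no effect on the partition.

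The crux, and the step I expect to demand the most care, is disjointness, the concern being that bundling $\bar\alpha U_i$ over varying $\bar\alpha$ could create overlaps between distinct blocks. The unifying device is this: if a nonzero vector lies in two pieces of the form $\bar\alpha X$ and $\bar\alpha' Y$ with $X,Y\in\{W\}\cup\{U_i\}_{i\in I}$, then it lies in $\bar\alpha\GF(q^t)\cap\bar\alpha'\GF(q^t)$, so Lemma~\ref{lem:parallel-classes} forces $\bar\alpha,\bar\alpha'$ to be parallel and to determine the same block; writing $\bar\alpha'$ as a $\GF(q)$-multiple of $\bar\alpha$ and using that $Y$ is a $\GF(q)$-subspace gives $\bar\alpha' Y=\bar\alpha Y$, and then injectivity of multiplication by $\bar\alpha$ yields $\bar\alpha X\cap\bar\alpha Y=\bar\alpha(X\cap Y)=\{0\}$ whenever $X\ne Y$. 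This single observation disposes of all three cases: for two $W$-spaces in distinct blocks it is already contradictory (the blocks would have to coincide), while for $\bar\alpha W\cap\bar U_j$ and for $\bar U_i\cap\bar U_j$ with $i\ne j$ it reduces within the common block to $W\cap U_j=\{0\}$ and $U_i\cap U_j=\{0\}$, respectively. Combining covering with pairwise trivial intersection shows the collection is a vector space partition of $\bigcup_{\bar\alpha\in\bar A}\bar\alpha\GF(q^t)$, as required.
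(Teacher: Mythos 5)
Your proof is correct and follows essentially the same route as the paper: the paper's (much terser) argument likewise rests on the multiplication maps $x\mapsto\bar\alpha x$ being linear bijections carrying $\GF(q^t)$ onto the blocks $\bar\alpha\GF(q^t)$, with Lemma~\ref{lem:parallel-classes} handling the interaction between blocks and Lemma~\ref{lem:bar-U_i-subspaces} supplying the subspace property of the $\bar U_i$. You have simply made explicit the covering and pairwise-disjointness checks that the paper leaves to the reader.
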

\begin{proof}
We use the fact that for any non-zero element $\alpha$ of $\GF(q^{kt})$, the map
\[
x\quad\mapsto\quad \alpha x
\]
is a bijective linear map that maps $\beta \GF(q^t)$ onto the space $\alpha\beta \GF(q^t)$. In particular, this is true for all elements of $\bar A$, so the spaces $\bar\alpha W$ and $\bar U_i = \bigcup_{\bar\alpha\in\bar A}\bar\alpha U_i$ form a partition; the sets $\bar U_i$ are subspaces due to Lemma~\ref{lem:bar-U_i-subspaces}.
\end{proof}

From the lemmas above we immediately get a partition of $V(kt,q)$, by substituting the spaces $\bar\alpha \GF(q^t)$, where $\bar\alpha\in\bar A$, in the $t$-spread
${\mathcal S}=\{\beta \GF(q^t)\colon\,\beta\in \GF(q^{kt})\}$, with the spaces described in the preceding lemma. Due to Lemma~\ref{lem:parallel-classes}, the number of substituted spaces equals the number of 
parallel classes in $\bar A$, which is $\frac{q^l-1}{q-1}$. Thus we obtain 

\begin{pro}\label{pro:5}
If $k\in \mathbb{N}_{\geq 2}, l\in \{1,2,\ldots,k\}$ and $W$ is a subspace of $\GF(q^t)$, then
there is a partition of $V(kt,q)$ into $\frac{q^{kt}-1}{q^{t}-1}-\frac{q^{l}-1}{q-1}$
subspaces of dimension $t$, $\frac{q^{l}-1}{q-1}$ subspaces of dimension
$\dim(W)$ and $\frac{q^{t}-q^{\dim(W)}}{q-1}$ subspaces of dimension
$l$.
\end{pro}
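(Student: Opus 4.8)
The plan is to assemble the three lemmas of this subsection into a single counting argument, using the $t$-spread $\mathcal{S}=\{\beta\GF(q^t):\beta\in\GF(q^{kt})\}$ as the backbone, and to refine it on the part covered by $\bar A$. First I would fix a vector space partition of $\GF(q^t)$, viewed as $V(t,q)$, consisting of the (nonzero) subspace $W$ together with one-dimensional subspaces $U_i$, $i\in I$. Such a partition always exists: the nonzero vectors outside $W$ are closed under multiplication by $\GF(q)^*$ (if $v\notin W$ and $cv\in W$ with $c\neq 0$, then $v=c^{-1}(cv)\in W$), so they split into one-dimensional subspaces, and the first packing condition in $V(t,q)$ forces $(q^{\dim W}-1)+|I|(q-1)=q^t-1$, i.e.\ $|I|=(q^t-q^{\dim W})/(q-1)$.

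Next I would invoke the preceding lemma to conclude that the spaces $\bar\alpha W$ ($\bar\alpha\in\bar A$) together with the spaces $\bar U_i$ ($i\in I$) form a vector space partition of the set $U:=\bigcup_{\bar\alpha\in\bar A}\bar\alpha\GF(q^t)$. By Lemma~\ref{lem:parallel-classes}, two spaces $\bar\alpha\GF(q^t)$ and $\bar\alpha'\GF(q^t)$ coincide when $\bar\alpha,\bar\alpha'$ are parallel and meet only in $0$ otherwise; hence $U$ is the union of exactly $(q^l-1)/(q-1)$ distinct members of the spread $\mathcal S$. Replacing precisely these members of $\mathcal S$ by the finer partition of $U$ just described yields a vector space partition of all of $V(kt,q)$: indeed $\mathcal S$ partitions $V\setminus\{0\}$, and every nonzero vector either lies in $U$ (and then in a unique piece of the refinement) or lies in an unreplaced member of $\mathcal S$.

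It then remains to count the members by dimension. The dimension-$t$ spaces are the members of $\mathcal S$ left untouched, so there are $\frac{q^{kt}-1}{q^t-1}-\frac{q^l-1}{q-1}$ of them. The spaces $\bar\alpha W$ have dimension $\dim(W)$, since multiplication by $\bar\alpha\neq 0$ is a $\GF(q)$-linear bijection; and $\bar\alpha W=\bar\alpha' W$ holds exactly when $\bar\alpha,\bar\alpha'$ are parallel—because $cW=W$ for $c\in\GF(q)^*$, while for non-parallel $\bar\alpha,\bar\alpha'$ the containments $\bar\alpha W\subseteq\bar\alpha\GF(q^t)$, $\bar\alpha'W\subseteq\bar\alpha'\GF(q^t)$ together with Lemma~\ref{lem:parallel-classes} force $\bar\alpha W\neq\bar\alpha'W$—so there are exactly $(q^l-1)/(q-1)$ of them. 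Each $\bar U_i$ has dimension $l$ by Lemma~\ref{lem:bar-U_i-subspaces}; to count them cleanly I would compare nonzero-vector tallies: $U$ contains $\frac{q^l-1}{q-1}(q^t-1)$ nonzero vectors, the spaces $\bar\alpha W$ account for $\frac{q^l-1}{q-1}(q^{\dim W}-1)$ of them, and dividing the remainder by the $q^l-1$ nonzero vectors of each $\bar U_i$ gives $(q^t-q^{\dim W})/(q-1)$ spaces of dimension $l$. This matches the stated type.

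The argument is essentially bookkeeping once the lemmas are in hand, so there is no single deep step; the only points needing care are (i) confirming that the auxiliary partition of $\GF(q^t)$ exists and has the claimed size, and (ii) counting the distinct spaces $\bar\alpha W$ and $\bar U_i$ correctly, that is, checking that parallel $\bar\alpha$ are identified while non-parallel ones are kept separate—which is exactly what Lemma~\ref{lem:parallel-classes} delivers. Counting the $\bar U_i$ through vector tallies, rather than by verifying injectivity of the map $i\mapsto\bar U_i$, sidesteps the most tedious part of the verification.
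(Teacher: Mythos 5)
Your proposal is correct and follows essentially the same route as the paper: substitute the $(q^l-1)/(q-1)$ spread members covered by $\bar A$ (one per parallel class, by Lemma~\ref{lem:parallel-classes}) with the refined partition supplied by the lemma on $\bar\alpha W$ and $\bar U_i$, then count. The paper states this in a single terse paragraph; you merely fill in the routine details (existence of the auxiliary partition of $\GF(q^t)$ and the tally of the $\bar U_i$), which the authors leave implicit.
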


\subsection{The bounds in Proposition \ref{pro:card-part-dep-dimension} are tight}\label{sec:5.2}

From the construction of Bu \cite{bu} follows that $V(m+m', q)$, where $m\geq m'$, has a vector space partition into $q^m+1$ spaces. Hence, the bound in Equation~\eqref{eq:8} is tight. The following example shows that the bound in Equation~\eqref{eq:7} in general cannot be improved:

\begin{example}
By Proposition \ref{pro:5} with $k=l=2, t=4$, and $\dim(W)= 3$, there is a partition $\mathcal P$ of $V(8,q)$ of type $(q^4+1-(q+1),q+1,q^3,0)$. The size of $\mathcal P$ is $q^4+q^3+1$.
\end{example}

\subsection{Construction II}

We consider the spread $\mathcal{S}$ of $\GF(q^{8})$, given by
\[
{\mathcal S}=\{\,\alpha \GF(q^4)\colon\, \alpha\in \GF(q^8)\,\}.
\]

In the course of applying Construction~I to $V(2\cdot2,q^{2})$ with
$W=\{0\}$ and $l=2$, we replace subspaces $L_{1}$, $L_{2}$, ...,
$L_{q^{2}+1}$ in $\mathcal S$ by the subspaces $L_{1}^{\perp}$, $L_{2}^{\perp}$,
..., $L_{q^{2}+1}^{\perp}$. They are all of dimension $4$ over $GF(q)$;
their intersections $L_{i}\cap L_{j}^{\perp}=:Q_{i,j}$ are 2-dimensional
over $GF(q)$ and $\{L_{j}^{\perp}\colon\, j\in\{1,2,\ldots,q^{2}+1\}$
partitions $\bigcup_{i}L_{i}$. 
These can be chosen such that $L_1=\GF(q^4)$ and $Q_{1,1}=\GF(q^2)$, which we will assume from now on. By Construction~I, there are $\alpha_i\in \GF(q^8)$ such that $L_i=\alpha_i L_1$, for $i\in\{2,3,\ldots,q^2+1\}$, and $Q_{i,j}=\alpha_i Q_{1,j}$.

Let $S_1$ be a 3-dimensional subspace of $L_1$ containing the space $Q_{1,1}$; let $\gamma_1=\alpha_1=1$.

\begin{lem}\label{lem:17}
 We can choose $\gamma_j\in \GF(q^4)$, for $j\in\{2,3,\ldots,q^2+1\}$, such that
 \begin{equation*}
  S_1 \cap Q_{1,j} = \gamma_j \GF(q)\,,
 \end{equation*}
 and
 \begin{equation*}
  Q_{i,j}=\gamma_j \alpha_i Q_{1,1}\,,
 \end{equation*}
 for all $i,j\in\{1,2,\ldots,q^2+1\}$.
\end{lem}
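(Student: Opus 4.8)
The plan is to reduce the second displayed identity to a single observation about cosets of $\GF(q^{2})$, after which everything is a dimension count and a cancellation. First I would make explicit the structural fact that is implicit in the application of Construction~I: each $Q_{1,j}$ is a coset of $\GF(q^{2})$. Indeed, with $W=\{0\}$ and $l=2$ the replacement spaces $L_{j}^{\perp}$ are exactly the sets $\bar U_{j}=\bigcup_{\bar\alpha\in\bar A}\bar\alpha U_{j}$ built from the $1$-dimensional $\GF(q^{2})$-subspaces $U_{j}$ that partition $\GF(q^{4})=L_{1}$. Intersecting $L_{j}^{\perp}=\bar U_{j}$ with $L_{1}=\bar\alpha_{1}\GF(q^{4})$ (where $\alpha_{1}=1$) leaves precisely the part indexed by $\bar\alpha_{1}$, namely $U_{j}$; hence $Q_{1,j}=U_{j}=\beta_{j}\GF(q^{2})$ for some $\beta_{j}\in\GF(q^{4})$, with $\beta_{1}=1$ since $Q_{1,1}=\GF(q^{2})$.

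Next I would justify the choice of the $\gamma_{j}$ and verify the first identity. Since $S_{1}$ is a hyperplane of $L_{1}=\GF(q^{4})$, of dimension $3$ over $\GF(q)$, that contains $Q_{1,1}$, and since the $Q_{1,j}$ partition $L_{1}$, for $j\geq2$ we have $Q_{1,j}\cap Q_{1,1}=\{0\}$. If $Q_{1,j}$ were contained in $S_{1}$ we would get $Q_{1,1}+Q_{1,j}=L_{1}\subseteq S_{1}$, contradicting $\dim S_{1}=3$. Therefore $S_{1}\cap Q_{1,j}$ is exactly $1$-dimensional over $\GF(q)$, and I define $\gamma_{j}$ to be a generator of it, which yields $S_{1}\cap Q_{1,j}=\gamma_{j}\GF(q)$ and, in particular, $\gamma_{j}\in Q_{1,j}\subseteq\GF(q^{4})$.

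The second identity then follows almost immediately. Because $\gamma_{j}$ is a nonzero element of $Q_{1,j}=\beta_{j}\GF(q^{2})$, closure under multiplication by $\GF(q^{2})$ gives $\gamma_{j}\GF(q^{2})=\beta_{j}\GF(q^{2})=Q_{1,j}$. Using the relation $Q_{i,j}=\alpha_{i}Q_{1,j}$ supplied by Construction~I, together with commutativity of $\GF(q^{8})$, I then obtain $Q_{i,j}=\alpha_{i}Q_{1,j}=\alpha_{i}\gamma_{j}\GF(q^{2})=\gamma_{j}\alpha_{i}Q_{1,1}$ for every $i$ and every $j\geq2$; the case $j=1$ is the given relation $Q_{i,1}=\alpha_{i}Q_{1,1}$ with $\gamma_{1}=1$.

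The only genuinely delicate point is the first paragraph: one has to unwind the application of Construction~I far enough to see that each $Q_{1,j}$ is a $\GF(q^{2})$-line and not merely a $2$-dimensional $\GF(q)$-space, since the whole argument hinges on $\gamma_{j}$ and $\beta_{j}$ spanning the same $\GF(q^{2})$-coset. Once that identification is secured, the hyperplane dimension count and the cancellation of $\alpha_{i}$ are entirely routine.
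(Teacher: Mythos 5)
Your proof is correct and follows essentially the same route as the paper: choose $\gamma_j$ to generate the one-dimensional intersection $S_1\cap Q_{1,j}$, use the fact from Construction~I that the $Q_{1,j}$ are exactly the cosets $\gamma\GF(q^2)=\gamma Q_{1,1}$ to conclude $Q_{1,j}=\gamma_j Q_{1,1}$, and then multiply by $\alpha_i$. The only difference is cosmetic: you obtain the non-triviality of $S_1\cap Q_{1,j}$ from the local dimension count $\dim(S_1\cap Q_{1,j})\geq 3+2-4=1$ (with the $2$-dimensional case excluded since $Q_{1,1}+Q_{1,j}=L_1\not\subseteq S_1$), whereas the paper counts the points of $S_1$ over the whole partition $\{S_1\cap Q_{1,j}\}_j$; both arguments are valid.
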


\begin{proof}
If $j\in\{2,3,\ldots,q^2+1\}$, then $S_1\cap Q_{1,j}$ must be at most 1-dimensional. Assuming it is 0-dimensional for
at least one $j$ and observing that $\{S_1\cap Q_{1,j}\colon\,j\in\{1,2,\ldots,q^2+1\}\}$ is a partition of $S_1$ yields a contradiction
by counting the number of points in $S_1$.

Consequently, since $S_1$ is contained in $L_1=\GF(q^4)$, there must be $\gamma_j\in \GF(q^4)$ for each $j$ such that $S_1\cap Q_{1,j}=\gamma_j \GF(q)$.
Since $\{\gamma Q_{1,1}\colon\,\gamma\in \GF(q^4)\}$ equals $\{Q_{1,j}\colon\, j\in\{1,2,\ldots,q^2+1\}\}$ by Construction~I and is a 
partition of $L_1$, $Q_{1,j}$ must be of this form. As $\gamma_j \GF(q)$ is a subset both of $\gamma_j Q_{1,1}$ and $Q_{1,j}$, we have that $Q_{1,j}=\gamma_j Q_{1,1}$.

Hence $Q_{i,j}$ equals $\alpha_i Q_{1,j}=\alpha_i \gamma_j Q_{1,1}$.
\end{proof}

Let $S_1^\perp$ be a 3-dimensional subspace of $L_1^\perp$ containing $Q_{1,1}$; let $\alpha_1'=1$.

\begin{lem}
For $i\in\{2,3,\ldots,q^2+1\}$, there are $\alpha_i'\in \GF(q^8)$ with
\begin{equation*}
S_1^\perp\cap Q_{i,1}=\alpha_i' \GF(q)\,,
\end{equation*}
for all $i\in\{2,3,\ldots,q^2+1\}$, and
\begin{equation*}
Q_{i,j} = \alpha_i'\gamma_j Q_{1,1}\,,
\end{equation*}
for all $i,j\in\{1,2,\ldots,q^2+1\}$.
\end{lem}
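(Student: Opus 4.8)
The plan is to mirror the proof of Lemma~\ref{lem:17} with the roles of $L_1$ and $L_1^\perp$ (and of $S_1$ and $S_1^\perp$) interchanged, and then to reconcile the resulting description of $Q_{i,j}$ with the one already furnished by Lemma~\ref{lem:17}. First I would establish the partition property that plays, for $L_1^\perp$, the role that $\{Q_{1,j}\}$ played for $L_1$: namely that $\{Q_{i,1}\colon i\in\{1,2,\ldots,q^2+1\}\}$ is a partition of $L_1^\perp$. Since $L_1^\perp$ is one of the $L_j^\perp$ and these partition $\bigcup_i L_i$, we have $L_1^\perp\subseteq\bigcup_i L_i$; as the $L_i$ are members of the spread $\mathcal S$ they meet pairwise only in $0$, so each nonzero vector of $L_1^\perp$ lies in exactly one $L_i$, hence in exactly one $Q_{i,1}=L_i\cap L_1^\perp$. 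Thus the $4$-dimensional space $L_1^\perp$ is partitioned into the $q^2+1$ two-dimensional spaces $Q_{i,1}$.

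Next, for $i\neq1$ I would argue that $S_1^\perp\cap Q_{i,1}$ is at most $1$-dimensional: if it were $2$-dimensional then $Q_{i,1}\subseteq S_1^\perp$, and together with $Q_{1,1}\subseteq S_1^\perp$ this would force the $4$-dimensional space $Q_{1,1}+Q_{i,1}$ into the $3$-dimensional space $S_1^\perp$, a contradiction. Since $\{S_1^\perp\cap Q_{i,1}\colon i\}$ partitions the $3$-dimensional space $S_1^\perp$, a point count ($q^3-1$ in all, of which $q^2-1$ come from $Q_{1,1}$ and at most $q-1$ from each of the $q^2$ remaining classes) shows that every $S_1^\perp\cap Q_{i,1}$ with $i\neq1$ is exactly $1$-dimensional. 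This yields elements $\alpha_i'\in \GF(q^8)$ with $S_1^\perp\cap Q_{i,1}=\alpha_i'\GF(q)$, which is the first displayed equation.

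For the second equation I would use that $\alpha_i'\in Q_{i,1}$, which by Lemma~\ref{lem:17} (with $j=1$ and $\gamma_1=1$) equals $\alpha_i Q_{1,1}=\alpha_i\GF(q^2)$. Hence $\alpha_i'=\alpha_i c$ for some $c\in\GF(q^2)\setminus\{0\}$, so $\alpha_i'Q_{1,1}=\alpha_i c\,\GF(q^2)=\alpha_i\GF(q^2)=Q_{i,1}=\alpha_i Q_{1,1}$. Multiplying the set identity $\alpha_i'Q_{1,1}=\alpha_i Q_{1,1}$ by $\gamma_j$ (a bijection of $\GF(q^8)$) and invoking commutativity of multiplication in $\GF(q^8)$, together with the formula $Q_{i,j}=\gamma_j\alpha_i Q_{1,1}$ from Lemma~\ref{lem:17}, gives $Q_{i,j}=\gamma_j\alpha_i' Q_{1,1}=\alpha_i'\gamma_j Q_{1,1}$. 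The case $i=1$ is immediate from $\alpha_1'=1$, so the identity holds for all $i,j\in\{1,2,\ldots,q^2+1\}$.

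The main obstacle is keeping the bookkeeping of the two spread structures consistent: the one on $L_1$ parametrized by the $\gamma_j$ through Lemma~\ref{lem:17}, and the one on $L_1^\perp$ parametrized by the $\alpha_i'$ through the counting argument above. The key that makes them compatible is the observation $\alpha_i'\in\alpha_i\GF(q^2)$, which renders $\alpha_i'$ and $\alpha_i$ interchangeable as left multipliers of $Q_{1,1}=\GF(q^2)$ and thereby lets the description of $Q_{i,j}$ obtained here agree with the one from Lemma~\ref{lem:17}.
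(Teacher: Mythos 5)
Your proposal is correct and follows essentially the same route as the paper: the existence of the $\alpha_i'$ is obtained by the same partition-and-point-count argument as in Lemma~\ref{lem:17} (which the paper invokes by reference), and the identity $Q_{i,j}=\alpha_i'\gamma_j Q_{1,1}$ is deduced from the observation that $\alpha_i'\GF(q)$ lies in both $\alpha_i Q_{1,1}$ and $\alpha_i'Q_{1,1}$, forcing $\alpha_i'Q_{1,1}=\alpha_i Q_{1,1}$. Your version merely spells out the details (the partition of $L_1^\perp$ by the $Q_{i,1}$, the dimension bound, and the explicit factor $c\in\GF(q^2)$) that the paper leaves implicit.
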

\begin{proof}
As in the proof of the preceding lemma, there are $\alpha_i'$ with $S_1^\perp\cap Q_{i,1}=\alpha_i' \GF(q)$, for
$i\in\{2,3,\ldots,q^2+1\}$. Since $\alpha_i' \GF(q)$ is a subset of both $\alpha_i Q_{1,1}$ and $\alpha_i' Q_{1,1}$, it follows that
$\alpha_i' Q_{1,1}$ equals $\alpha_i Q_{1,1}$, and hence we have $Q_{i,j}=\gamma_j\alpha_i' Q_{1,1}$.
\end{proof}

Note that 
\[L_i = \bigcup_{j=1}^{q^2+1}Q_{i,j}=\bigcup_{j=1}^{q^2+1}\alpha_i'Q_{1,j}=\alpha_i'L_1\,,\]
and
\[L_j^\perp = \bigcup_{i=1}^{q^2+1}Q_{i,j}=\bigcup_{i=1}^{q^2+1}\gamma_j Q_{i,1}=\gamma_j L_1^\perp\,.\]
Hence, $\gamma_j S_1^\perp$ is a subspace of $L_j^\perp$ for all $j$, denoted by $S_j^\perp$.

Similarly, we obtain that $\alpha_i'S_1$ is a subspace of $L_i$ for all $i$, which we denote by $S_i$.

\begin{lem}
For all $i,j\in\{2,3,\ldots,q^2+1\}$,
\begin{equation*}
S_i\cap L_j^\perp=S_j^\perp \cap L_i = \gamma_j\alpha_i' \GF(q).
\end{equation*}
\end{lem}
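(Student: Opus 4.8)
The plan is to identify each of the two intersections as the $1$-dimensional $\GF(q)$-space $\gamma_j\alpha_i'\GF(q)$ by sandwiching it between a lower and an upper bound. The key starting observation is that, since $S_i\subseteq L_i$ and $S_j^\perp\subseteq L_j^\perp$, both intersections lie inside $L_i\cap L_j^\perp=Q_{i,j}$, which is $2$-dimensional over $\GF(q)$. Thus each intersection has dimension at most $2$, and the whole argument reduces to determining which subspace of $Q_{i,j}$ it actually is.

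First I would establish the inclusions $\gamma_j\alpha_i'\GF(q)\subseteq S_i\cap L_j^\perp$ and $\gamma_j\alpha_i'\GF(q)\subseteq S_j^\perp\cap L_i$ by translating every containment back to the base spaces $S_1$ and $S_1^\perp$ via multiplication by a field element. Because $\GF(q^8)$ is commutative and $S_i=\alpha_i'S_1$, the statement $\gamma_j\alpha_i'\GF(q)\subseteq S_i$ is equivalent to $\gamma_j\GF(q)\subseteq S_1$, which is exactly $S_1\cap Q_{1,j}=\gamma_j\GF(q)$ from Lemma~\ref{lem:17}. Likewise, using $L_j^\perp=\gamma_j L_1^\perp$, the containment $\gamma_j\alpha_i'\GF(q)\subseteq L_j^\perp$ reduces to $\alpha_i'\GF(q)\subseteq L_1^\perp$, which holds because $S_1^\perp\cap Q_{i,1}=\alpha_i'\GF(q)\subseteq S_1^\perp\subseteq L_1^\perp$. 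The three symmetric containments certifying $\gamma_j\alpha_i'\GF(q)\subseteq S_j^\perp\cap L_i$ are checked the same way, now using $S_j^\perp=\gamma_j S_1^\perp$ and $L_i=\alpha_i'L_1$. Since $\gamma_j\alpha_i'\neq0$, this exhibits a common $1$-dimensional subspace in both intersections.

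It then remains to rule out that either intersection is all of the $2$-dimensional $Q_{i,j}$, and this is the one step with genuine content. Arguing by contradiction, suppose $S_i\cap L_j^\perp=Q_{i,j}$, i.e.\ $Q_{i,j}\subseteq S_i$. Multiplying by $\alpha_i'^{-1}$ and using $Q_{i,j}=\alpha_i'\gamma_j Q_{1,1}=\alpha_i'Q_{1,j}$ together with $S_i=\alpha_i'S_1$ turns this into $Q_{1,j}\subseteq S_1$; but for $j\in\{2,\ldots,q^2+1\}$ Lemma~\ref{lem:17} gives $S_1\cap Q_{1,j}=\gamma_j\GF(q)$, so the $2$-dimensional $Q_{1,j}$ cannot sit inside $S_1$, a contradiction. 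The identical manoeuvre applied to $S_j^\perp\cap L_i=Q_{i,j}$ reduces, via $Q_{i,j}=\gamma_j\alpha_i'Q_{1,1}=\gamma_j Q_{i,1}$ and $S_j^\perp=\gamma_j S_1^\perp$, to $Q_{i,1}\subseteq S_1^\perp$, contradicting $S_1^\perp\cap Q_{i,1}=\alpha_i'\GF(q)$. Hence each intersection is exactly $1$-dimensional and therefore equals $\gamma_j\alpha_i'\GF(q)$.

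The bulk of the work is mere translation bookkeeping, which is painless precisely because $\GF(q^8)$ is commutative, so conjugations such as $\alpha_i'^{-1}\gamma_j\alpha_i'$ collapse to $\gamma_j$ and multiplication maps preserve dimension and containment. The only point requiring care is matching each excluded case to the correct earlier fact---the line $S_1\cap Q_{1,j}=\gamma_j\GF(q)$ of Lemma~\ref{lem:17} for the $S_i$ intersection, and the line $S_1^\perp\cap Q_{i,1}=\alpha_i'\GF(q)$ from the lemma introducing the $\alpha_i'$ for the $S_j^\perp$ intersection---since these are exactly what certify that the relevant $Q$-space meets $S_1$ or $S_1^\perp$ in only a line.
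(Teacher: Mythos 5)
Your proof is correct and rests on the same mechanism as the paper's: everything is reduced to the base facts $S_1\cap Q_{1,j}=\gamma_j\GF(q)$ and $S_1^\perp\cap Q_{i,1}=\alpha_i'\GF(q)$ by translating with the multiplication maps $x\mapsto\gamma_j x$ and $x\mapsto\alpha_i' x$. The paper just packages this more compactly, computing $S_j^\perp\cap L_i=S_j^\perp\cap Q_{i,j}=\gamma_j\bigl(S_1^\perp\cap Q_{i,1}\bigr)=\gamma_j\alpha_i'\GF(q)$ in a single chain (using that multiplication by a nonzero field element is a linear bijection and hence commutes with intersections), whereas you split the same computation into a lower-bound inclusion plus an exclusion of the $2$-dimensional case; both are valid.
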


\begin{proof}
As $S_j^\perp$ is a subspace of $L_j^\perp$, we have that $S_j^\perp \cap L_i$ equals
$S_j^\perp\cap Q_{i,j}=\gamma_j(S_1^\perp\cap Q_{i,1})$, which for the $i$ and $j$ we consider equals 
$\gamma_j\alpha_i' \GF(q)$, by the preceding lemma. The proof of the other equality is similar.
\end{proof}

As in the proof of Lemma \ref{lem:17}, we get that every 3-dimensional subspace of $L_i$ that contains the 2-dimensional space $Q_{i,1}$ intersects each space $Q_{i,j}$, for $j\in\{2,3,\dots,q^2+1\}$, in a 1-dimensional space. Further, if $S_i'$ is a 3-dimensional subspace of $L_i$, containing $Q_{i,1}$, and distinct from $S_i$, then 
\begin{equation*}
S_i'\cap S_i=Q_{i,1}\,.
\end{equation*}
 Hence, for $i,j\in\{2,3,\dots,q^{2}+1\}$, the intersection $S_{i}'\cap S_{i}\cap L_{j}^{\perp}$
is the trivial subspace, which together with \[
S_{i}'\cap S_{j}^{\perp}\subseteq L_{i}\cap S_{j}^{\perp}=S_{i}\cap L_{j}^{\perp}\]
yields $S_{i}'\cap S_{j}^{\perp}=\{0\}$.

Thus, if $\mathcal{P}$ consists of the spaces $S\setminus\{L_i\colon\,i\in\{1,2,\ldots,q^2+1\}\}$, the spaces $S_i'$, for $i\in\{2,3,\ldots,q^2+1\}$, the spaces $S_j^\perp$ for $j\in\{2,3,\ldots,q^2+1\}$,
and all 1-dimensional subspaces of $\bigcup_i L_i$ not contained in any $S_i'$ or $S_j^\perp$, then $\mathcal{P}$ yields
a vector space partition of $\GF(q^8)$. 

Therefore, we have now proved the following proposition:

\begin{pro}\label{pro:6}
For any prime power $q$, there exists a vector space partition $\mathcal P$ of $V(8,q)$ into $q^4-q^2$ subspaces of dimension 4, $2q^2$ spaces of dimension $3$, and the remaining $q^5 - q^4 + q + 1$ spaces of dimension 1.
\end{pro}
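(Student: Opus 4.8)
The proposition claims a specific vector space partition of $V(8,q)=\GF(q^8)$ with $q^4-q^2$ spaces of dimension $4$, $2q^2$ spaces of dimension $3$, and $q^5-q^4+q+1$ spaces of dimension $1$. The entire Construction~II machinery leading up to it has already set up the key objects: the spread $\mathcal S$, the $q^2+1$ subspaces $L_i$ and their "perp" replacements $L_j^\perp$, the $2$-dimensional cells $Q_{i,j}=\alpha_i'\gamma_j Q_{1,1}$, and the $3$-dimensional subspaces $S_i'\subset L_i$ and $S_j^\perp\subset L_j^\perp$ (for indices $2,\dots,q^2+1$), all living inside $\bigcup_i L_i$. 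The text has also just verified the crucial disjointness $S_i'\cap S_j^\perp=\{0\}$.

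**The plan.** I would prove the proposition in two stages: first confirm that the stated collection $\mathcal P$ really is a partition of $\GF(q^8)$, then count the members of each dimension to read off the type. For the partition claim, recall that $\mathcal P$ consists of (a) the spread members $\mathcal S\setminus\{L_i\}$, which by Lemma~\ref{lem:parallel-classes}-type reasoning already partition the complement $\GF(q^8)\setminus\bigcup_i L_i$; (b) the spaces $S_i'$ and $S_j^\perp$ for $i,j\in\{2,\dots,q^2+1\}$; and (c) every remaining $1$-dimensional subspace of $\bigcup_i L_i$. So the only thing to check is that the nonzero vectors of $\bigcup_i L_i$ are covered exactly once. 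The $1$-dimensional spaces mop up everything not already accounted for, so the real content is that the $S_i'$ and $S_j^\perp$ are pairwise disjoint away from $0$: within a fixed $L_i$ the spaces $S_i'$ are pairwise equal only when $i$ agrees, two distinct $S_i', S_{i'}'$ meet only in $\{0\}$ because $L_i\cap L_{i'}=\{0\}$ in the spread, and $S_i'\cap S_j^\perp=\{0\}$ is exactly what the preceding paragraph established. This guarantees that no nonzero vector is double-covered, so assigning all leftover points to distinct $1$-spaces yields a genuine vector space partition.

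**Counting the pieces.** With the partition verified, the type follows by bookkeeping. The spread $\mathcal S$ has $q^4+1$ members; we discard the $q^2+1$ spaces $L_i$, leaving $q^4+1-(q^2+1)=q^4-q^2$ subspaces of dimension $4$. The $3$-dimensional spaces are the $S_i'$ and the $S_j^\perp$ for $i,j\in\{2,\dots,q^2+1\}$, giving $q^2+q^2=2q^2$ of them. The number of $1$-dimensional spaces is then forced by the first packing condition: the total nonzero vectors number $q^8-1$, the dimension-$4$ spaces account for $(q^4-q^2)(q^4-1)$ of them and the dimension-$3$ spaces for $2q^2(q^3-1)$, so the count of $1$-spaces is $\bigl[(q^8-1)-(q^4-q^2)(q^4-1)-2q^2(q^3-1)\bigr]/(q-1)$, which I would expand to confirm it equals $q^5-q^4+q+1$.

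**Main obstacle.** The routine packing arithmetic is the easy part; the delicate point is being certain that the $1$-dimensional spaces can be chosen to cover precisely the residual points without accidental overlap with the $S_i'$ or $S_j^\perp$. Concretely, one must check that no nonzero vector of $\bigcup_i L_i$ lies in two of the chosen $3$-spaces simultaneously, since only then is it legitimate to declare every uncovered point its own $1$-space. The within-$L_i$ intersections ($S_i'\cap S_i=Q_{i,1}$, and the fact that distinct $L_i$ are spread members meeting in $0$) together with the already-proved cross intersection $S_i'\cap S_j^\perp=\{0\}$ close this gap, so I expect the verification to go through cleanly once those three cases are laid out explicitly.
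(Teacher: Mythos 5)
Your proposal is correct and takes essentially the same approach as the paper: the paper's proof of Proposition~\ref{pro:6} is exactly the Construction~II development, culminating in the observation that $\mathcal S\setminus\{L_i\}$, the $q^2$ spaces $S_i'$, the $q^2$ spaces $S_j^\perp$, and the leftover $1$-spaces of $\bigcup_i L_i$ form a partition, with the three pairwise-disjointness cases you list being precisely the ones the paper verifies. Your packing-condition count $\bigl[(q^8-1)-(q^4-q^2)(q^4-1)-2q^2(q^3-1)\bigr]/(q-1)=q^5-q^4+q+1$ also checks out.
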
 

\subsection{On the bound of Theorem \ref{thm:lowbound-a}}
The next example shows that the bound in Theorem \ref{thm:lowbound-a} in general cannot be improved. The example is a continuation of Example \ref{ex:lowbound-a}.

\begin{example} We consider $V(8,2)$. 
By Proposition \ref{pro:6}, there exists a partition of type $(12,8,0,19)$ in $V(8,2)$.
Every vector space has a trivial vector space partition into subspaces of dimension 1. Hence, there are also vector space partitions of $V(8,2)$ of the types $(12,7,0,26)$ and $(12,6,0,33)$, as was premised in Example \ref{ex:lowbound-a}. 
\end{example}

The following example demonstrates that in general the bound that we get from Theorem \ref{thm:lowbound-a} is not sharp.

\begin{example} \label{ex:5}
We consider $V(8,3)$. From Proposition \ref{pro:6} we get that there exists a vector space partition of $V(8,3)$ with $72$ spaces of dimension $4$, $18$ spaces of dimension $3$, and the remaining spaces of dimension $1$. The question is whether we can find a vector space partition with the same number of spaces of dimension $3$ but with more than $72$ spaces of dimension 4. 

With the above parameters, Theorem \ref{thm:lowbound-a} just gives that
\begin{equation*}
a\geq 18-18\cdot17\,\frac{\frac{1}{2}(3^2-1)+1-3}{3^4-1-18(3-1)}>4\,.
\end{equation*}
However, by using Theorem \ref{thm:lowbound-a} we can exclude the existence of a vector space partition of type $(76,8,0,136)$, as by Theorem \ref{thm:lowbound-a}, there can be at most $75$ subspaces of dimension $4$ if the vector space partition contains $8$ spaces of dimension $3$. 
As a consequence of this, and by making use of the trivial partition of a vector space into $1$-dimensional spaces, we deduce that for no integer $i\geq0$ there exists a vector space partition of type $(76,8+i,0,136-i\cdot13)$.  

By Corollary \ref{corol:3}, the construction of a vector space partition of type $(75,9,0,163)$ requires that there exists a partial $4$-spread in $V(8,3)$ of size $75$ that can be embedded in a maximal partial $4$-spread. 
It was proved by Heden, Faina, Marcogini and Pambianco \cite{pambianco} that the maximal size of any maximal partial 2-spread in $V(4,9)$ is 74. Neither this fact nor Theorem \ref{thm:10} exclude the possibilty of a partial $4$-spread of size $75$ that can be embedded in a maximal partial $4$-spread of $V(8,3)$.

We have not yet been able to construct a vector space partition of type $(74,9,0,203)$ in $V(8,3)$. 
\end{example}

\section{Extending the results via derivation}\label{sec:6}

Clearly, there is some kind of relationship between the existence of certain types of vector space partitions and maximal partial $t$-spreads in $V(2t,q)$, as demonstrated in Example \ref{ex:5}. Although maximal partial 2-spreads in $V(4,q)$ have been extensively studied for more than 40 years (the first result was by Mesner \cite{mesner} in 1967), the spectrum of the sizes of maximal partial spreads is known for just a few values of $q$, in fact just for $q\leq5$, and the maximal possible size of a maximal partial spread is known just for $q\leq9$. Consequently, this our study 
indicates that the complete spectrum of 
types of vector space partitions will be extremely hard to find.

In Example \ref{ex:5} we excluded the possibility of the existence of a partition $\mathcal P$ of a certain type by 
considering a partition ${\mathcal P}'$ that we could derive from $\mathcal P$ by splitting up some of the spaces of $\mathcal P$ into subspaces. Using the necessary conditions presented in Section \ref{sec:3}, we found that ${\mathcal P}'$, and hence $\mathcal{P}$, cannot exist. 

Furthermore, observe that the first derivation procedure described above and used in Example \ref{ex:5} gives us the following variant of Theorem \ref{thm:lowbound-a}:
\begin{thm}\label{thm:4} Assume that $V(2t,q)$
has a vector space partition into $q^{t}+1-a$ spaces of dimension $t$ and $m_{d}$ spaces of dimension $d$, for $d=1,2,\dots,t-1$. Let \[
\mu_{d}=\max\left(\left\{ 0,1,\ldots,m_{d}\right\} \cap\left[0,\frac{q^{t}-1}{q^{t-d}-1}\right)\right).\]
 Then we have \[
a\geq\max_{\substack{d\in\{1,\ldots,t-1\}\\
x\in\{1,\ldots,\mu_{d}\}}
}(x-\mathrm{R}_{q}(t,d,x)).\]
\end{thm}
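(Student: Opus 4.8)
The plan is to derive Theorem~\ref{thm:4} as a direct consequence of Theorem~\ref{thm:lowbound-a}, exploiting the derivation procedure already illustrated in Example~\ref{ex:5}. The key observation is that Theorem~\ref{thm:lowbound-a} furnishes, for a \emph{single} fixed dimension $d$ and a \emph{single} value of $m_d$ satisfying the hypothesis $m_d<(q^t-1)/(q^{t-d}-1)$, the lower bound $a\geq m_d-\mathrm{R}_q(t,d,m_d)$. To obtain Theorem~\ref{thm:4} I must show that the same lower bound holds with $m_d$ replaced by \emph{any} integer $x\in\{1,\ldots,\mu_d\}$, and for \emph{any} dimension $d\in\{1,\ldots,t-1\}$, so that $a$ dominates the maximum over all such pairs $(d,x)$.

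\emph{First step: reducing the number of spaces of dimension $d$.} Fix a dimension $d$ and an integer $x$ with $1\le x\le\mu_d\le m_d$. I would derive a new vector space partition $\mathcal{P}'$ from $\mathcal{P}$ by splitting $m_d-x$ of the spaces of dimension $d$ into subspaces of dimension $1$ (each $d$-dimensional space admits the trivial partition into $(q^d-1)/(q-1)$ lines). The resulting partition $\mathcal{P}'$ still has exactly $q^t+1-a$ spaces of dimension $t$, and now has precisely $x$ spaces of dimension $d$. Crucially, the parameter $a$ is unchanged, since no $t$-dimensional space is touched by this splitting.

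\emph{Second step: applying Theorem~\ref{thm:lowbound-a} to $\mathcal{P}'$.} By the definition of $\mu_d$ as the largest element of $\{0,1,\ldots,m_d\}$ lying in $[0,(q^t-1)/(q^{t-d}-1))$, every $x\le\mu_d$ also satisfies $x<(q^t-1)/(q^{t-d}-1)$, so the hypothesis of Theorem~\ref{thm:lowbound-a} is met for $\mathcal{P}'$ with its $x$ spaces of dimension $d$. The theorem then yields
\[
a\geq x-\mathrm{R}_q(t,d,x)\,.
\]
Since $d\in\{1,\ldots,t-1\}$ and $x\in\{1,\ldots,\mu_d\}$ were arbitrary, $a$ is at least the maximum of $x-\mathrm{R}_q(t,d,x)$ over all admissible pairs, which is exactly the claimed bound. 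One should note that applying Theorem~\ref{thm:lowbound-a} for indices $d\le t/2$ requires only its stated hypothesis $d<t$ together with $m_d>0$, so there is no restriction to $d>t/2$ here.

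\emph{Main obstacle.} I expect the only genuine subtlety to be the bookkeeping around $\mu_d$: one must confirm that restricting to $x\le\mu_d$ is exactly what guarantees the strict inequality $x<(q^t-1)/(q^{t-d}-1)$ needed to invoke Theorem~\ref{thm:lowbound-a}, and that the reindexing by the splitting does not inadvertently alter $a$ or introduce spaces of dimension exceeding~$t$. Both are routine once stated carefully, so the proof is essentially an application of Theorem~\ref{thm:lowbound-a} optimized over the derived partitions; the substance of the result lies entirely in that earlier theorem.
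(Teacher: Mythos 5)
Your proposal is correct and is essentially the paper's own argument: the paper derives Theorem~\ref{thm:4} exactly by the derivation procedure of splitting $m_d-x$ of the $d$-dimensional spaces into $1$-dimensional subspaces (leaving $a$ untouched) and then invoking Theorem~\ref{thm:lowbound-a} for each admissible pair $(d,x)$, just as you do. Your bookkeeping on $\mu_d$ guaranteeing the strict inequality $x<(q^t-1)/(q^{t-d}-1)$ is the right check and matches the intent of the paper.
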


There are additional possibilities to derive vector space partitions from a given vector space partition $\mathcal P$ of a finite vector space $V$. We can take any subspace $U$ of $V$ and consider the intersection of all spaces in $\mathcal P$ with $U$. We then get a vector space partition $\mathcal{P}_U$ of $U$, by letting
\[
{\mathcal P}_U=\{\, U\cap U_i\colon\, U_i\in \mathcal{P},U\cap U_i \neq \{0\}\}\,;
\] 
compare Proposition \ref{pro:1}. The type of ${\mathcal P}_U$ depends on the choice of $U$.

Considering $\mathcal{P}_H$, where $H$ is a hyperplane, ties in with the approach used in Section~\ref{sec:3}. More precisely, assume that a vector space partition of $V(n,q)$ of type $m$ exists. So the integer hull $I(P_m)$ of the associated polytope $P_m$ (compare Section~\ref{sec:3}) is not empty, and \emph{one} point $x\in I(P_m)$ must reflect the derived types obtained by cutting with hyperplanes; that is, $x_b$ of the hyperplanes of $V(n,q)$ are of type $b$. In particular, if $x_b > 0$, then a hyperplane $H$ of type $b$ exists. However, then $\mathcal{P}_H$ is a partition of $V(n-1,q)$ with type $m(b)$, where
\begin{equation*}
{m(b)}_i = b_i + (m_{i+1}-b_{i+1})
\end{equation*}

Thus, we call a type $m$ \emph{feasible} for $V(n,q)$ if it fulfills all known necessary conditions, in particular the one that $I(P_m)$ is nonempty, and furthermore $I(P_m)$ contains a green point. A point $x\in I(P_m)$ is \emph{green}, if $m(b)$ is feasible for $V(n-1,q)$.

Therefore, being feasible is a necessary condition for a type to be realizable as a partition.

In \cite{esssv}, it was proved that for any $n\leq 7$
and $q=2$,
a type $m$ is realizable iff it fulfills all of the following:
\begin{itemize}
\item the first packing condition
\item the dimension condition
\item a weaker version of the tail conditions
\item $m$ is not one of the following exceptions:
	\begin{enumerate}
	\item $n=6$ and $m=(7,3,5)$
	\item $n=7$ and $m=(1,13,7,0)$
	\item $n=7$ and $m=(1,13,6,3)$
	\item $n=7$ and $m=(1,14,3,5)$
	\item $n=7$ and $m=(17,1,5)$.
	\end{enumerate}
\end{itemize}

Hence,
for the purpose of a computer search, 
we implemented
the recursive check outlined above, 
checking for the first packing condition, the dimension condition and the tail condition at each derivation stage. We found that for the first, fourth and fifth exception, the integer hull of their associated polyhedra is empty; thus there is no need for recursion in these cases. The second exception is excluded after several recursion steps, which demonstrates that feasibility as defined above is stronger than the individual conditions. The third exception survives the complete derivation check.
Thus, we conclude that feasibility
is not sufficient for the realizability of a type.

Finally, the problem of vector space partitions can be generalized as follows:

\begin{question}
Given a ranked finite lattice $L$ with order $\leq$ and least element $0_L$, does there exist a subset $P$ of its elements such that
\begin{itemize}
\item the meet of any two elements $x,y \in P$ is $0_L$, and
\item for each atom $a$ of $L$, there exists $x\in P$ such that $a \leq x$?
\end{itemize}
What if the type of $P$, that is the number of elements in $P$ for each rank, is prescribed?
\end{question}

If $L$ is the subspace lattice of $V(n,q)$, we arrive at our original problem.

\end{document}